\newtheorem{theorem}{Theorem}[section]
\newtheorem{lemma}[theorem]{Lemma}
\newtheorem{corollary}[theorem]{Corollary}
\newtheorem{proposition}[theorem]{Proposition}
\newtheorem{remark}[theorem]{Remark}
\newtheorem{definition}[theorem]{Definition}
\newcommand{\ncom}{\newcommand}
\ncom{\ep}{\epsilon}
\ncom{\rar}{\rightarrow}
\ncom{\thrar}{\twoheadrightarrow}
\ncom{\lrar}{\longrightarrow}
\ncom{\ov}{\overline}
\ncom{\what}{\widehat}
\ncom{\dar}{\downarrow}    
\ncom{\ery}{\eqnarray}
\ncom{\xrar}{\xrightarrow}
\ncom{\xdar}{\xdownarrow}
\newcommand{\ignore}[1]{}
\ncom{\m}{\mbox}
\ncom{\sta}{\stackrel}
\ncom{\C}{{\mathbb C}}
\ncom{\A}{{\mathbb A}}
\ncom{\Z}{{\mathbb Z}}
\ncom{\Q}{{\mathbb Q}}
\ncom{\R}{{\mathbb R}}
\ncom{\G}{{\mathbb G}}
\ncom{\HH}{{\mathbb H}}
\ncom{\al}{\alpha}
\ncom{\p}{{\mathbb P}}
\ncom{\N}{{\mathbb N}}
\ncom{\K}{{\mathbb K}}
\ncom{\X}{{\mathbb X}}
\ncom{\f}{\frac}
\ncom{\cA}{{\mathcal A}}
\ncom{\cB}{{\mathcal B}}
\ncom{\cD}{{\mathcal D}}
\ncom{\cDB}{{\mathcal D \mathcal B}}
\ncom{\cX}{{\mathcal X}}
\ncom{\cO}{{\mathcal O}}
\ncom{\cW}{{\mathcal W}}
\ncom{\cL}{{\mathcal L}}
\ncom{\cP}{{\mathcal P}}
\ncom{\cH}{{\mathcal H}}
\ncom{\cS}{{\mathcal S}}
\ncom{\cM}{{\mathcal M}}
\ncom{\cC}{{\mathcal C}}
\ncom{\cT}{{\mathcal T}}
\ncom{\cF}{{\mathcal F}}
\ncom{\cN}{{\mathcal N}}
\ncom{\cJ}{{\mathcal J}}
\ncom{\cV}{{\mathcal V}}
\ncom{\cZ}{{\mathcal Z}}
\ncom{\cU}{{\mathcal U}}
\ncom{\cSU}{{\mathcal S \mathcal U}}
\ncom{\cG}{{\mathcal G}}
\ncom{\cQ}{{\mathcal Q}}
\ncom{\cR}{{\mathcal R}}
\ncom{\cY}{{\mathcal Y}}
\ncom{\cE}{{\mathcal E}}
\ncom{\cI}{{\mathcal I}}
\ncom{\mylabel}[1]{{\rm (#1)}\label{#1}}
\ncom{\Hom}{{\textit{Hom}}}
\ncom{\eop}{{\hfill $\Box$}}
\begin{document}
\baselineskip=16pt

\title[]{Embedding properties of linear series on hyperelliptic varieties}


\author[S.Chintapalli]{Seshadri Chintapalli}
\author[J. N. Iyer]{Jaya NN  Iyer}

\address{The Institute of Mathematical Sciences, CIT
Campus, Taramani, Chennai 600113, India}
\email{seshadrich@imsc.res.in}
\email{jniyer@imsc.res.in}

\footnotetext{Mathematics Classification Number: 53C55, 53C07, 53C29, 53.50. }
\footnotetext{Keywords: Hyperelliptic varieties, Linear systems, G-linearised sheaves, Global generation, Syzygies.}

\begin{abstract}
In this paper, we investigate linear systems on hyperelliptic varieties. We prove analogues of well-known theorems on abelian varieties, like Lefschetz's embedding theorem and higher k-jet embedding theorems.
Syzygy or $N_p$-properties are also deduced for appropriate powers of ample line bundles.
\end{abstract}
\maketitle

\setcounter{tocdepth}{1}
\tableofcontents



\section{Introduction}

Suppose $L$ is an ample line bundle on a smooth projective variety $X$. Some questions that arise are basepoint freeness, very ampleness, and syzygy properties or $N_p$-properties, $p\geq 0$, associated to the line bundle $L$ on $X$.
These properties are fairly well-understood on curves, surfaces and abelian varieties \cite{Green}, \cite{Voisin}, \cite{Gallego}, \cite{Lefschetz}, \cite{Kempf}, \cite{Iyer},\cite{Pareschi}, \cite{PP}. There are conjectures by Fujita and Mukai \cite[Conjecture 4.2]{EL} on the behaviour of (adjoint) linear systems $|K_X+L^{\otimes r}|$, associated to powers of ample line bundles tensored with the canonical line bundle $K_X$ of $X$.  

The aim of this paper is to investigate above questions for linear systems on hyperelliptic varieties. A hyperelliptic surface $S$ is a complex projective surface which is not an abelian surface,  but admitting an \'etale cover $ A\rar S$, where $A$ is an abelian surface.
Hyperelliptic surfaces were classified by Enriques-Severi and Bagnera-de Franchis \cite{En-Se}, \cite{Bg-dF}. More generally, H. Lange \cite{Lange} extended this notion to higher dimensions. A smooth projective variety $X$ is called a \textit{hyperelliptic variety} if it is not isomorphic to an abelian variety but admitting an \'etale covering $A\rar X$, where $A$ is an abelian variety.

We show in \S \ref{veryample}, the following analogue of Lefschetz embedding:

\begin{theorem}
Suppose $X$ is a hyperelliptic variety of dimension $n$. Let $L$ be an ample line bundle on $X$. Then we have

1) $L^k$, for $k\geq 3$, is always very ample. 

2) $L^2$ is very ample, if $L$ has no base divisor.
\end{theorem}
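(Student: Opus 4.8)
The plan is to descend the corresponding statement on the abelian variety $A$ through the étale cover $\pi\colon A\to X$. Since $\pi$ is étale, $\pi^*L$ is an ample line bundle on $A$, and by the classical theorems of Lefschetz (for very ampleness of $(\pi^*L)^k$ with $k\geq 3$) and Ohbuchi/Koizumi-type results (for $(\pi^*L)^2$ when $\pi^*L$ has no base divisor), the pulled-back bundle $\pi^*(L^k)=(\pi^*L)^k$ is very ample on $A$ in the stated ranges. The issue is that very ampleness of $\pi^*(L^k)$ does not immediately give very ampleness of $L^k$ downstairs, because $\pi$ is not injective; one must instead work $G$-equivariantly, where $G=\mathrm{Gal}(A/X)$ is the deck group (a finite group, in fact an abelian variety quotient situation where $A/G\cong X$).

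First I would set up the $G$-linearisation. The line bundle $L^k$ on $X$ pulls back to a $G$-linearised line bundle $M:=\pi^*(L^k)$ on $A$, and $H^0(X,L^k)=H^0(A,M)^G$, the $G$-invariant sections. The key point is that separation of points and tangent vectors on $X$ by $H^0(X,L^k)$ is equivalent to separation of $G$-orbits and of tangent vectors at points of $A$ by the $G$-invariant sections $H^0(A,M)^G$. So the real task is: given that $M$ is very ample on $A$, show that the subspace $H^0(A,M)^G\subseteq H^0(A,M)$ still separates points lying in distinct $G$-orbits and separates tangent directions. This is where the powers $k\geq 3$ (resp.\ $k\geq 2$) should enter: one wants enough "room" so that after averaging a section over $G$ one still retains the separating property. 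Concretely, I would factor $M$ as a tensor product of several ample $G$-linearised bundles (possible since $k\geq 3$ or $k\ge 2$) and use a standard averaging/projection argument — pick a section of one factor vanishing at one orbit but not another, translate by $G$, multiply, and sum over $G$ — to produce a $G$-invariant section with the required vanishing. Separation of tangent vectors is handled the same way, spending one extra factor of $L$.

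The main obstacle I expect is precisely this equivariant separation argument, and in particular the bookkeeping needed when two of the points to be separated lie in the \emph{same} $G$-orbit — there separation is automatic — versus the case where a tangent vector at a point $a\in A$ is fixed by the stabiliser of $a$ in $G$. Since $G$ acts freely on $A$ (the covering is étale and $X=A/G$ is smooth, so stabilisers are trivial), the stabiliser issue does not actually arise, which simplifies matters; the only genuine content is producing $G$-invariant sections separating distinct orbits, and for this the decomposition of $L^k$ into $k\geq 3$ ample pieces is the crucial input. For part (2), with only two factors available, I would additionally invoke the "no base divisor" hypothesis on $L$ to guarantee that $L^2$ (equivalently $(\pi^*L)^2$) is base-point free and that the relevant sections of the two factors can be chosen to avoid a given orbit, then run the same averaging. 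I would also need to recall or cite, from the earlier sections of the paper, the general principle that ampleness and very ampleness questions on $A/G$ reduce to $G$-equivariant questions on $A$ — this is the technical backbone that makes the whole reduction legitimate.
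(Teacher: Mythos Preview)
Your overall framework is right and matches the paper: pull back to $A$, identify $H^0(X,L^k)$ with the $\tilde G$-invariants in $H^0(A,\pi^*L^k)$, and translate very ampleness on $X$ into the $G$-equivariant statement that $\pi^*L^k\otimes I_{G.a}$ is $G$-globally generated for every $a\in A$. The paper sets exactly this up in Lemma \ref{GGoneone} and Corollary \ref{GGample}.

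The gap is in your ``standard averaging/projection argument'' for producing $G$-invariant sections that vanish on a prescribed orbit $G.a$ but not at $b$. Naive averaging $s\mapsto \frac{1}{|G|}\sum_g g.s$ does \emph{not} preserve the condition $s(a)=0$, since $(g.s)(a)=s(g^{-1}a)$ need not vanish. The alternative you sketch, taking a section of one factor and forming $\prod_{g\in G} g.s$, lands in $H^0(L^{|G|})$, not $H^0(L^3)$; so the bound you would get this way depends on $|G|$, not on the universal constant $3$. Nothing in your outline explains how to get separation of orbits by invariants with only three (or two) copies of $L$, and this is precisely where the content lies.

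The paper fills this gap with the $M$-regularity machinery of \S\ref{GGGG}, not with an elementary averaging trick. The key computation is Proposition \ref{VT1}: if the ample $G$-bundle $L$ has no $G$-invariant base divisor, then $L\otimes I_{G.a}$ is $M$-regular (the only nontrivial locus is $V^1$, which is identified with the $G$-invariant base locus of $L$ and hence has codimension $\ge 2$). One then invokes Corollary \ref{C1} (M-regular sheaf tensored with an M-regular line bundle is $G$-globally generated), which in turn rests on Proposition \ref{CGG} and Lemma \ref{KT}. The averaging in Lemma \ref{KT} is not applied to a single section but to the whole Pareschi--Popa multiplication map $\bigoplus_\alpha H^0(\mathcal F\otimes\alpha)\otimes H^0(H\otimes\check\alpha)\to H^0(\mathcal F\otimes H)$; surjectivity of this map (from $M$-regularity) survives projection onto the $\tilde G$-invariants, and that is what yields $G$-global generation of $L\otimes(L\otimes I_{G.a})$ with no dependence on $|G|$. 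For part 1) one first shows $L^2$ is $G$-globally generated (hence has no $G$-invariant base divisor), then reruns the same argument with $L^2$ in place of $L$ to get $L^3$ $G$-very ample. So the classical Lefschetz/Ohbuchi theorems on $A$ are not invoked directly; they are effectively re-proved in the $G$-equivariant setting via $M$-regularity.
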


These are well-known theorems on abelian varieties due to Lefschetz, Kempf and Ohbuchi \cite{Lefschetz}, \cite{Kempf}, \cite{Ohbuchi}.
Furthermore, we extend generalizations of above concepts, namely $k$-jet ampleness, to hyperelliptic varieties, as follows.

\begin{theorem}
Suppose $L$ is an ample line bundle on a hyperelliptic variety $X$. Then the following hold, for $k\geq 0$:

1) $L^{k+2}$ is $k$-jet ample

2) $L^{k+1}$ is $k$-jet ample if $L$ has no base divisor.
\end{theorem}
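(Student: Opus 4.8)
The strategy is to transport everything to the abelian cover and invoke the jet-ampleness theorems available there. Write $\pi\colon A\rar X$ for an \'etale covering with $A$ abelian, $G$ for the (finite) group of deck transformations, so that $X=A/G$; put $d=|G|$ and $M:=\pi^*L$, an ample line bundle on $A$ equipped with its canonical $G$-linearisation. Because $\pi$ is \'etale and we work in characteristic zero, $\pi_*\cO_A$ splits off $\cO_X$ as a $G$-module, whence $H^i(X,\cF)=H^i(A,\pi^*\cF)^G$ for every coherent $\cF$ on $X$; in particular $H^i(X,L^m)=0$ for all $i>0$, $m\geq1$, since $M^m$ is ample on the abelian variety $A$. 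Thus $L^{k+2}$ is $k$-jet ample precisely when $H^1(X,L^{k+2}\otimes\cI_Z)=0$ for every $0$-dimensional ``$(k{+}1)$-cluster'' $Z=\bigsqcup_iZ_i\subset X$, where $Z_i$ is supported at a point $x_i$ with $\cO_{Z_i}=\cO_{X,x_i}/\cI_{x_i}^{k_i}$, the $x_i$ distinct and $\sum_ik_i=k+1$ (and similarly for $L^{k+1}$). Since $\pi$ is a local isomorphism near each preimage of a point of $Z$, we have $\pi^*\cI_Z=\cI_{\widetilde Z}$ with $\widetilde Z:=\pi^{-1}(Z)$; and $\widetilde Z$ is $G$-invariant, equal to the \emph{disjoint} union $\bigsqcup_{g\in G}g\cdot Z_0$ of the $d$ transforms of one $(k{+}1)$-cluster $Z_0\subset A$ lifting $Z$ (freeness of the $G$-action gives disjointness). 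Taking $G$-invariants, the theorem reduces to the vanishing $H^1(A,M^{k+2}\otimes\cI_{\widetilde Z})^G=0$, resp.\ $H^1(A,M^{k+1}\otimes\cI_{\widetilde Z})^G=0$ when $|L|$ has no base divisor.

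The inputs on the abelian side are the sharp jet-ampleness results: on any abelian variety a tensor product of $m\geq2$ ample line bundles is $(m-2)$-jet ample (Lefschetz--Kempf for $m=3$, Bauer--Szemberg in general), so $M^{k+2}$ is $k$-jet ample; and Ohbuchi's theorem together with its jet-theoretic refinements gives that $M^{k+1}$ is $k$-jet ample once $|M|$ has no base divisor. I would first check that $|M|$ inherits ``no base divisor'' from $|L|$: a base divisor of $|M|$ is cut out by all sections of $M$, hence is $G$-invariant and descends to a base divisor of $|L|$, which is excluded. The real work is to upgrade ``$M^{k+2}$ is $k$-jet ample'', which only controls clusters of total multiplicity $\leq k+2$, to the vanishing above for the cluster $\widetilde Z$ of total multiplicity $d(k{+}1)$. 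This upgrade cannot be purely formal: the full group $H^1(A,M^{k+2}\otimes\cI_{\widetilde Z})$ can be nonzero (for instance when $M^{k+2}$ fails to be very ample and $\widetilde Z$ meets a ``bad'' locus), so one must genuinely use the $G$-structure. The plan is to build a $G$-invariant section of $M^{k+2}$ with \emph{arbitrary} prescribed jet along $Z_0$, directly and equivariantly: one runs the usual theta-group/seesaw construction (Mumford, Kempf, Bauer--Szemberg) expressing sections of $M^{k+2}$ as products of sections of translates of lower powers of $M$, but now selects the translations and the factors in a $G$-equivariant fashion, so that the product is $G$-invariant without any averaging. Here freeness of the action, the largeness $h^0(A,M)=d\cdot h^0(X,L)\geq d$, and the $G$-linearisation of $M$ all enter; the decomposition $\pi_*\cO_A=\bigoplus_\chi\cL_\chi$ (the $\cL_\chi$ torsion line bundles on $X$) gives convenient bookkeeping, since $\bigoplus_\chi H^0(X,L^{k+2}\otimes\cL_\chi)=H^0(A,M^{k+2})$.

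Granting this, taking $G$-invariants yields $H^1(X,L^{k+2}\otimes\cI_Z)=0$ for every $(k{+}1)$-cluster $Z$, i.e.\ $L^{k+2}$ is $k$-jet ample; the case $k=1$ recovers the Lefschetz-type embedding stated above. Part 2) is handled identically, the no-base-divisor hypothesis feeding the Ohbuchi-type input on $A$. I expect the main obstacle to be precisely the equivariant construction of the previous paragraph: passing from $Z$ to $\widetilde Z$ multiplies the total jet multiplicity by $d$, so the bare abelian theorems are too weak, and one is forced to carry the $G$-action through every step of the construction of sections with prescribed jets; doing this uniformly over all non-reduced, multi-point clusters, and over the various admissible (possibly non-abelian) finite groups $G$, is the delicate part.
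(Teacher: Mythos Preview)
Your diagnosis of the obstacle is correct and sharp: pulling back a $(k{+}1)$-cluster $Z$ to $A$ yields a $G$-invariant cluster $\widetilde Z$ of total multiplicity $d(k{+}1)$, so the bare Bauer--Szemberg or Ohbuchi statements on $A$ (which only control clusters of multiplicity $\leq k{+}2$, resp.\ $\leq k{+}1$) are insufficient, and one must produce $G$-\emph{invariant} sections with prescribed jets. But your proposal stops precisely at this point. The ``plan'' --- run the theta-group/seesaw construction $G$-equivariantly, choosing translations and factors compatibly with the $G$-action so that the product is $G$-invariant --- is not a proof; you yourself flag it as ``the delicate part''. For a general free action of a possibly non-abelian $G$ and a general non-reduced multi-point cluster, there is no off-the-shelf equivariant version of these constructions to cite, and making one work is exactly the content of the theorem. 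As written, the argument has a genuine gap at its center.

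The paper closes this gap not by an ad hoc equivariant theta construction but by setting up a $G$-equivariant version of the Pareschi--Popa $M$-regularity machinery. The key device is an ``averaging'' map on sections, which turns the Pareschi--Popa surjectivity statement (their Theorem~2.5) into a surjection onto the $\tilde G$-invariant part $H^0(\cF\otimes H)^{\tilde G}$. From this one gets: (i) $M$-regular $G$-sheaves are $G$-continuously globally generated; (ii) the tensor product of an $M$-regular $G$-sheaf with an $M$-regular $G$-line bundle is $G$-globally generated (the paper's Corollary~\ref{C1}). The jet statement is then proved by induction on $k$ via a $G$-analogue of \cite[Lemma~3.3]{PP2}: $G$-$(k{-}1)$-jet ampleness of $L^{k}$ gives I.T.\ with index $0$ (hence $M$-regularity) for $L^{k}\otimes I_{G.a_1}^{k_1}\otimes\cdots\otimes I_{G.a_l}^{k_l}$ with $\sum k_i=k$, and tensoring with the $M$-regular line bundle $L$ via Corollary~\ref{C1} yields $G$-global generation of $L^{k+1}\otimes I_{G.a_1}^{k_1}\otimes\cdots\otimes I_{G.a_l}^{k_l}$, i.e.\ $G$-$k$-jet ampleness of $L^{k+1}$. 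The base case is the Lefschetz/Ohbuchi-type Theorem~\ref{VT2}. In short, the paper's $M$-regularity framework is precisely a systematic, citable replacement for the equivariant ``products of translated theta sections'' you are gesturing at; without developing something equivalent, your approach does not go through.
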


Regarding $N_p$-property, we show the analogue of Pareschi's theorem \cite{Pareschi}(Lazarsfeld's conjecture) on abelian varieties, extended to hyperelliptic varieties.

\begin{theorem}
Suppose $L$ is an ample line bundle on a hyperelliptic variety $X$. Then $L^{p+k}$ satisfies $N_p$-property, for $k\geq 3$.
\end{theorem}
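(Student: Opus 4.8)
The plan is to reduce the statement to Pareschi's theorem on the abelian cover and then descend. Write $n=p+k$, so $n\ge p+3$, set $M=L^{\otimes n}$, and let $\pi\colon A\to X$ be the \'etale Galois cover by an abelian variety, with group $G$ acting freely on $A$; put $\mathcal L=\pi^*L$ and $\mathcal M=\pi^*M=\mathcal L^{\otimes n}$, a $G$-linearised ample line bundle on $A$. Two facts are immediate: $M$ is very ample (as $n\ge 3$, this is the very-ampleness theorem above), hence globally generated; and $H^i(X,M^{\otimes j})=H^i(A,\mathcal M^{\otimes j})^G=0$ for all $i>0$, $j\ge 1$, since ample line bundles have no higher cohomology on abelian varieties and $\pi$ is finite \'etale in characteristic $0$. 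Because of this vanishing, $N_p$ for $M$ on $X$ is equivalent, by the standard cohomological criterion for $N_p$ in terms of the kernel bundle (\cite{Green}, \cite{EL}), to
$$H^1\!\bigl(X,\ \textstyle\bigwedge^{r+1}M_M\otimes M^{\otimes j}\bigr)=0\qquad\text{for }0\le r\le p,\ j\ge 1,$$
where $M_M=\ker\!\bigl(H^0(X,M)\otimes\mathcal O_X\to M\bigr)$.

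On $A$, $\mathcal M=\mathcal L^{\otimes n}$ with $n\ge p+3$, so by Pareschi \cite{Pareschi} (Lazarsfeld's conjecture) $\mathcal M$ satisfies $N_p$, equivalently $H^1\!\bigl(A,\bigwedge^{r+1}M_{\mathcal M}\otimes\mathcal M^{\otimes j}\bigr)=0$ for $0\le r\le p$, $j\ge 1$, with $M_{\mathcal M}=\ker(H^0(A,\mathcal M)\otimes\mathcal O_A\to\mathcal M)$. The two kernel bundles are not equal — $H^0(X,M)=H^0(A,\mathcal M)^G$ is only the trivial $G$-isotypic summand of $H^0(A,\mathcal M)$ — but, writing $H^0(A,\mathcal M)=H^0(X,M)\oplus W$ with $W^G=0$ and using that $H^0(X,M)\otimes\mathcal O_A$ already generates $\mathcal M$, one gets the $G$-equivariant exact sequence
$$0\longrightarrow\pi^*M_M\longrightarrow M_{\mathcal M}\longrightarrow W\otimes\mathcal O_A\longrightarrow 0 ,$$
and hence a $G$-equivariant filtration of $\bigwedge^{r+1}M_{\mathcal M}$ with bottom term $\bigwedge^{r+1}\pi^*M_M$ and successive quotients $\bigwedge^{a}\pi^*M_M\otimes\bigwedge^{r+1-a}W$, $0\le a\le r$.

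To conclude one must show $H^1\!\bigl(A,\bigwedge^{r+1}\pi^*M_M\otimes\mathcal M^{\otimes j}\bigr)^G=0$, since $H^1(X,\bigwedge^{r+1}M_M\otimes M^{\otimes j})$ is exactly this $G$-invariant part. The plan is to feed the vanishing on $A$ into the filtration above (twisted by $\mathcal M^{\otimes j}$) and pass to $G$-invariants, which is exact; running an induction on $r$, the vanishing $H^1(A,\bigwedge^{r+1}M_{\mathcal M}\otimes\mathcal M^{\otimes j})=0$ reduces matters to surjectivity of the $G$-equivariant multiplication maps $H^0(A,\mathcal M)\otimes H^0(A,\bigwedge^{a}M_{\mathcal M}\otimes\mathcal M^{\otimes j})\to H^0(A,\bigwedge^{a}M_{\mathcal M}\otimes\mathcal M^{\otimes(j+1)})$, which hold again because $\mathcal M$ satisfies $N_p$ on $A$. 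The structural point that makes this go through is that $H^0(X,M)$ is precisely the trivial isotypic piece, so $\bigwedge^{\bullet}H^0(X,M)$ stays a trivial $G$-module and $G$-invariants are compatible with the Koszul differentials; the extra terms supported on the exterior powers $\bigwedge^{b}W$ with $b\ge 1$ get absorbed by the same vanishing on $A$ applied to lower exterior powers.

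I expect the descent in the last paragraph to be the main obstacle, and it is a genuine one: $N_p$ does not descend along arbitrary finite \'etale covers (a plane cubic embedding of an elliptic curve fails $N_1$, whereas its pullback under a suitable isogeny satisfies $N_1$), so the argument must exploit that $\mathcal M=\mathcal L^{\otimes n}$ is a \emph{power} with $n\ge p+3$ — it therefore satisfies the stronger property $N_{n-3}$, which implies $N_p$, and, more importantly, all multiplication maps among the $H^0(\mathcal L^{\otimes a})$ are uniformly controlled — and one has to keep track of the $G$-action through Pareschi's cohomological machinery rather than merely cite the bare statement of $N_p$ on $A$. An alternative route, likely cleaner, would be to descend the relative Fourier--Mukai / $M$-regularity criterion of Pareschi--Popa \cite{PP} along $\pi$ and reproduce their proof of Lazarsfeld's conjecture directly on $X$; the delicate ingredient there is setting up a usable Fourier--Mukai functor on the non-group variety $X$.
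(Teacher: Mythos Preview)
Your overall strategy coincides with the paper's: pull back to the abelian cover $A$, invoke Pareschi's vanishing $H^1(A,\wedge^{p+1}M_{\mathcal M}\otimes\mathcal M^{\otimes h})=0$ for $\mathcal M=\pi^*L^{\otimes n}$ with $n\ge p+3$, and then descend to the corresponding vanishing for $M_M$ on $X$. You also correctly isolate the only nontrivial point, namely that $\pi^*M_M$ and $M_{\mathcal M}$ differ by the trivial bundle $W\otimes\mathcal O_A$.

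Where you diverge from the paper is in how you bridge that gap. You set up the exact sequence
\[
0\longrightarrow \pi^*M_M\longrightarrow M_{\mathcal M}\longrightarrow W\otimes\mathcal O_A\longrightarrow 0,
\]
pass to the exterior-power filtration, and propose an induction on $r$ using surjectivity of multiplication maps. The paper bypasses all of this with a single observation (its Lemma on $M_L^G$): because the inclusion $H^0(X,M)\hookrightarrow H^0(A,\mathcal M)$ is split by the averaging projector, $\pi^*M_M$ sits in $M_{\mathcal M}$ as a \emph{direct summand}. Then $\wedge^{p+1}\pi^*M_M\otimes\mathcal M^{\otimes h}$ is a direct summand of $\wedge^{p+1}M_{\mathcal M}\otimes\mathcal M^{\otimes h}$, and the $H^1$-vanishing transfers immediately; one more application of $\pi_*(-)^G$ gives the vanishing on $X$. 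No filtration, no induction, no tracking of $G$-isotypics on $\wedge^bW$.

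So your proof sketch is not wrong in spirit, but the inductive descent you outline is both more laborious and left genuinely incomplete (the sentence ``the extra terms \dots get absorbed by the same vanishing on $A$ applied to lower exterior powers'' is exactly the step that needs to be written out, and as stated it mixes $\wedge^a\pi^*M_M$ with $\wedge^aM_{\mathcal M}$). Your closing remark that $N_p$ does not descend along arbitrary \'etale covers is well taken and shows you see the issue; the paper's answer is that in this particular situation the kernel bundle itself splits off, which is a stronger statement than $N_p$ and does descend. Replace your filtration argument by this direct-summand observation and the proof becomes two lines.
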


The key point in the proof is to note that a hyperelliptic variety $X$ is realized as a finite group quotient $A/G$ of an abelian variety $A$, for some finite group $G$ acting freely on $A$ \cite[Theorem 1.1, p.492]{Lange}. Hence a line bundle on a hyperelliptic variety is regarded as a $G$-linearized line bundle on $A$. 
We introduce the notion of $G$-global generation of $G$-linearized sheaves in \S \ref{Gglobal} and obtain correspondence of usual global generation on $X$ with $G$-global generation on $A$. 
We then look at the notion of $M$-regularity of $G$-linearized sheaves and suitably extend the techniques used by Pareschi and Popa \cite{PP},\cite{PP2}. The proofs are reduced to showing $G$-global generation of appropriate $G$-linearized coherent sheaves, obtained by applying the Fourier-Mukai functor.

We note that in \cite{PP}, a part of above results are obtained for irregular varieties. It is known that hyperelliptic surfaces are irregular, 
however higher dimensional hyperelliptic varieties may not always be irregular. Hence, the 'averaging' of sections employed in \S \ref{averaging}
is relevant and new, and is used to descend data suitably.
 
The Syzygy property is proved for powers of ample line bundles in \S \ref{syzygy}. It is implied by  vanishing of the first cohomology of the bundle $M_L$ (the kernel of the evaluation map), twisted with appropriate powers of an ample line bundle $L$. 

There are further questions,  determining the equations of special subvarieties $\cW_d^r$ (parametrising degree $d$ line bundles on $C$ with at least $r+1$ sections) inside $Jac(C)$ and on finite quotients of $Jac(C)$. One also needs to understand $M$-regularity of the theta divisor restricted to these subvarieties.
We hope to look into it in future.

\section{$G$-linearized sheaves and Fourier-Mukai functor}
 

Suppose $X$ is a hyperelliptic variety of dimension $n$ defined over the complex numbers. By definition, it is not an abelian variety but it admits an \'etale cover $A\rar X$ such that $A$ is an abelian variety. By \cite[Theorem 1.1, p.492]{Lange}, there is a finite group $G$ acting biholomorphically on $A$, without fixed points.
In other words, we can write $X$ as a group quotient $X=A/G$, with an \'etale quotient morphism
$$
\pi: A\rar X=A/G.
$$
To investigate coherent sheaves on $X$, we note that their pullback on $A$ under the morphism $\pi$, is equipped with an action of the group $G$.
Hence to investigate line bundles and more generally coherent sheaves on $X$, it would suffice to investigate coherent sheaves on $A$ with a $G$-action.
To make this more precise, we recall the following facts.

\subsection{$G$-linearized sheaves}\cite{MFK}

 Suppose $A$ is an abelian variety and is equipped with an action by a finite group $G$.
 In this subsection, we recall $G$-linearized sheaves on an abelian variety $A$.  
 \begin{definition}
\cite[Definition 1.6, p.30]{MFK}. A coherent sheaf $\mathcal{F}$ on $A$ is called $G$-linearized (or a $G$-sheaf) if we have an isomorphism $\phi_g:{g^*{\mathcal{F}}\xrightarrow{\sim}\mathcal{F} }$, 
for all $g\in G$, and such that following diagram of coherent sheaves on $A$

\begin{equation*}
\xymatrix{(gh)^*\cF \ar[r]^{h^*\phi_g} \ar[dr]_{\phi_{gh}}&  h^*{\cF} \ar[d]^{\phi_h}\\ & \cF}
\end{equation*}
is commutative, for any pair $g,h\in G$, i.e. $\phi_{gh}=\phi_h\circ h^*{\phi_g}$.
\end{definition} 

Assume that the action of the group $G$ on $A$ is free.
We note that $G$-linearized sheaves are relevant to our situation, since it corresponds to coherent sheaves on the quotient variety $A/G$.
In fact, we have:

\begin{proposition}\label{oneone}
Consider a pair $(A,G)$ as above, and assume that the action of $G$ on $A$ is free. Then the functor $\cF\mapsto \pi^*\cF$ is an equivalence of category of coherent $\cO_X$-modules on $X$ and the category of coherent $G$-sheaves on $A$. The inverse functor is given by $\cG \mapsto (\pi_*(\cG))^G$ (the subsheaf of $G$-invariant sections of $\pi_*(\cG)$). Locally free sheaves correspond to locally free sheaves of the same rank.
\end{proposition}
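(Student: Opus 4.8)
The plan is to prove this as a descent statement for the free étale quotient $\pi\colon A\to X=A/G$, which is a standard application of faithfully flat (here, even étale) descent, so the main work is bookkeeping rather than any genuine obstacle. First I would recall that since $\pi$ is étale (in particular faithfully flat and finite), for any quasi-coherent $\cO_X$-module $\cF$ the pullback $\pi^*\cF$ carries a canonical $G$-linearization: the isomorphisms $\phi_g\colon g^*\pi^*\cF\xrightarrow{\sim}\pi^*\cF$ arise from the equalities $\pi\circ g=\pi$, and the cocycle condition $\phi_{gh}=\phi_h\circ h^*\phi_g$ follows from the functoriality of these identifications. This shows $\cF\mapsto\pi^*\cF$ lands in the category of $G$-sheaves on $A$.

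Next I would construct the inverse. Given a $G$-sheaf $\cG$ on $A$, the pushforward $\pi_*\cG$ is a $G$-sheaf on $X$ via the trivial $G$-action on $X$ (the $G$-action on $\pi_*\cG$ being induced by the linearization on $\cG$), and I set $\cG\mapsto(\pi_*\cG)^G$, the sheaf of $G$-invariants, which is a well-defined $\cO_X$-module because $\pi_*\cO_A$ contains $\cO_X$ as the $G$-invariants (using that $G$ acts freely, so $\pi$ is a $G$-torsor and $\pi_*\cO_A$ is étale-locally on $X$ isomorphic to $\cO_X[G]$). The key local computation is that, Zariski- or étale-locally on $X$ where $\pi$ is isomorphic to the trivial $G$-torsor $G\times U\to U$, a $G$-sheaf on $G\times U$ is the same datum as an $\cO_U$-module, and under this identification $(\pi_*\cG)^G$ recovers that module while $\pi^*$ of an $\cO_U$-module recovers the corresponding $G$-sheaf; this gives the two natural isomorphisms $(\pi_*\pi^*\cF)^G\cong\cF$ and $\pi^*(\pi_*\cG)^G\cong\cG$ compatibly with the $G$-linearizations, establishing the equivalence of categories.

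For the natural transformations I would exhibit them globally rather than only locally: the unit $\cF\to(\pi_*\pi^*\cF)^G$ is the obvious map (adjunction composed with landing in invariants, which is an isomorphism by the local computation above), and the counit $\pi^*(\pi_*\cG)^G\to\cG$ is induced by the canonical evaluation $\pi^*\pi_*\cG\to\cG$ restricted along the inclusion of invariants; one checks these are mutually inverse equivalences by verifying they are isomorphisms étale-locally, which reduces to the trivial-torsor case. Finally, the statement about locally free sheaves of the same rank is immediate: $\pi$ is étale, so $\pi^*$ preserves local freeness and rank, and conversely if $\pi^*\cF$ is locally free of rank $r$ then $\cF$ is locally free of rank $r$ by faithfully flat descent of this property along $\pi$. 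I do not anticipate a real obstacle here; the only point requiring care is the freeness hypothesis, which is exactly what makes $\pi$ a $G$-torsor and hence makes the invariants functor exact and inverse to pullback — without it one would only get a weaker comparison.
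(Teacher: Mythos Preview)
Your proposal is correct and in fact goes well beyond what the paper does: the paper does not give its own argument at all, but simply cites \cite[Proposition~2, p.~70]{Mumford}. The descent argument you sketch (étale/faithfully flat descent along the $G$-torsor $\pi$, with the local trivial-torsor computation giving the unit and counit isomorphisms) is precisely the standard proof one finds in Mumford, so your approach is not merely a different route but essentially a reconstruction of the cited reference.
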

\begin{proof}
See \cite[Proposition 2, p.70]{Mumford}.
\end{proof} 
 
 We will use this proposition when we define $G$-global generation of $G$-coherent sheaf on $A$, in \S \ref{Gglobal}, and its equivalence with the usual global generation of the corresponding sheaf on the quotient variety $X$.

 \subsection{Fourier-Mukai functor}
 
 Suppose $A$ is an abelian variety of dimension $g$ over $\mathbb C$ and $\hat{A}$ be its dual abelian variety  \cite[Section 2.4, p.34]{BL}. Denote $\cP$ the normalized Poincar$\acute{e}$ line bundle on $A\times\hat{A}$. Let $G$ be a finite group acting on $A$ and
$\pi:A\rightarrow X=A/G$ be the quotient morphism. 


We recall some facts from \cite{M}.
Denote $\cC{oh}(A)$ (respectively $\cC{oh}(\hat{A})$) the category of coherent sheaves on $A$ (resp. on $\hat{A}$).
Let 
$$
\mathcal{\hat{S}}: \cC{oh}(A) \rightarrow \cC{oh}(\hat{A})
$$ 
be the functor defined as follows: 
$$
\mathcal{\hat{S}}\cF:={p_2}_*(p^*_1\cF\otimes\cP).
$$
Similarly we can define the functor
$$
\cS:\cC{oh}(\hat{A})\rar \cC{oh}(A)
$$ 
given as 
$$
\mathcal{S}\cG:={p_1}_*(p^*_2\cG\otimes\cP).
$$
Denote $D(A)$ (respectively $D(\hat{A})$) the derived category of $\cC{oh}(A)$ (respectively $\cC{oh}(\hat{A})$). Using \cite[Proposition 2.1, p.155]{M}, we have a
derived functor 
$$
\cR\mathcal{\hat{S}}:\cD(A)\rightarrow \cD(\hat{A})
$$ 
given by 
$$
\cR\mathcal{\hat{S}}\cF=\cR{p_2}_*(p^*_1\cF\otimes\cP).
$$
Similarly we obtain the derived functor
$$
\cR\cS:\cD(\hat{A})\rar \cD(A).
$$ 
The derived functors are called the Fourier-Mukai functor.

Let $\cD^G(A)$ 
be the subcategory of complex of coherent $G$-sheaves, of $\cD(A)$. Then the Fourier functor $\cR\mathcal{\hat{S}}$ restricts to a functor on $\cD^G(A)$.

\subsection{Mukai-regularity}

Recall the notion of of I.T (index theorem) and M-regularity from \cite{M} and \cite{PP}. In particular we state them for coherent $G$-sheaves.

With notations as in previous subsection, denote $R^j\hat{\cS}(\cF)$, the cohomologies of the derived complex $R\hat{\cS}\cF$.
A coherent $G$-sheaf $\cF$ on $A$ satisfies W.I.T (the \textit{weak index theorem}) with index $i$ if $R^j\hat{\cS}(\cF)=0$, for all $j\neq i$.

A stronger notion is as below.

\begin{definition}
A coherent $G$-sheaf $\mathcal{F}$ on $X$ is said to satisfy I.T (index theorem) with index $i$ if $H^j(\mathcal{F}\otimes \alpha)=0$, for all $\alpha\in \hat{A}$
and for all $j\neq i$.
\end{definition}

In this situation the sheaf $R^i\hat{\cS}(\cF)$ is locally free. If $\cF$ satisfies W.I.T or I.T. with index $i$, then the sheaf $R^i\hat{\cS}(\cF)$ is denoted by $\hat{\cF}$ and is called the \textit{Fourier transform} of $\cF$.

Given a coherent sheaf $\cF$ on $A$, we denote the support of the sheaf $R^i\hat{\cS}(\cF)$ by
$$
S^i(\cF):= \m{Supp}(R^i\hat{\cS}(\cF)).
$$
\begin{definition}
A coherent $G$-sheaf $\mathcal{F}$ on $A$ is called M-regular if 
$$
\m{codim } S^i(\mathcal{F})\,>\,i
$$ 
for each $i=1,...,g$.

\end{definition}

\begin{remark}\label{AM}
1) Coherent $G$-sheaves on $X$ which satisfy I.T with index $0$, are examples of M-regular $G$-sheaves.

2) We also note that an ample line bundle $H$ satisfies I.T with index $0$  \cite[Example 2.2, p.289]{PP}. This will be relevant in our later sections.
\end{remark}

Denote the cohomological support locus \cite{Green2}:
$$
V^i(\cF):= \{\eta\in \m{Pic}^0(A): h^i(\cF\otimes \eta)\neq 0 \} \subset \m{Pic}^0(A).
$$
There is an inclusion $S^i(\cF)\subset V^i(\cF)$.

Hence a $G$-sheaf is $M$-regular if 
\begin{equation}\label{cohlocus}
\m{codim}(V^i(\cF))\,>\,i
\end{equation}
for any $i=1,...,g$.

\section{$G$-global generation and global generation on hyperelliptic varieties}\label{Gglobal}


Suppose $|G|=k$ and the group $G$ acts on a coherent sheaf $\cF$.
Consider the extension of $G$ by $\mu_k$, the group of $k^{th}$ roots of unity. In other words, there is an exact sequence:
$$
1 \rightarrow \mu_k \rightarrow \tilde{\cG}\rightarrow G \rightarrow 0.
$$
We assume that there is a splitting and let $\tilde{G}\subset \tilde{\cG}$ denote the image of $G$ under the splitting map.

Then we note that $\tilde{G}$ acts on $H^0(A, \cF)$.
Denote the subspace of $\tilde{G}$-invariants: 
$$
H^0(A,\cF)^{\tilde{G}}= {\lbrace s\in H^0(A,\cF):\tilde{g}{{s}=s \,\,\forall \tilde{g} \in \tilde{G} \rbrace}}.
$$

Since our aim is to to obtain global generation of coherent sheaves on the quotient variety $X=A/G$, we introduce the following notion for coherent $G$-sheaves on $A$ as follows. In the next subsection, we will prove its equivalence with usual global generation on $X$.

\subsection{$G$-global generation, $G$-very ampleness and $G$-$k$ jet ampleness}

We keep notations as above.

\begin{definition}
A coherent $G$-sheaf $\mathcal{F}$ on $A$ is called $G$-globally generated if the evaluation map
$$
ev:{H^0(A,\mathcal{F})}^{\tilde{G}}\otimes\mathcal{O}_A \rightarrow\mathcal{F}
$$
is surjective. Here the map $ev$ is evaluation of $\tilde{G}$-invariant sections at any point of $A$.
\end{definition}

Now we formulate very ampleness for coherent $G$-sheaves as follows. For any $a\in A$, let
$G.a:=\{ga: \m{for any} g\in G\}$. Then this is the orbit of the point $a\in A$ under the action of $G$. Let $I_{G.a}$ denote the ideal sheaf of the orbit $G.a$ in $A$. Then this is a coherent $G$-sheaf on $A$.

\begin{definition}
A $G$-line bundle $L$ on $A$ is called $G$-very ample if the coherent $G$- sheaf $L\otimes I_{G.a}$ is $G$-globally generated, for all $a\in A$.
\end{definition}

This notion can be extended to $k$-jet very amplessness for $G$-line bundles as well. We do it as follows.

\begin{definition}
A $G$-line bundle $L$ on $A$ is $G$-$k$-jet very ample if the coherent $G$ sheaf
$$
L\otimes I_{G.a_1}^{k_1}\otimes ...\otimes I_{G.a_l}^{k_l}
$$
is $G$-globally generated, for distinct points $a_1,a_2,...,a_l\in A$ such that $k_1+k_2+...+k_l=k$.
In other words, the evaluation map given by $\tilde{G}$-invariant sections
$$
H^0(A,L\otimes I_{G.a_1}^{k_1}\otimes ...\otimes I_{G.a_l}^{k_l})^{\tilde{G}} \rar H^0(A,L\otimes I_{G.a_1}^{k_1}\otimes ...\otimes I_{G.a_l}^{k_l}\otimes \cO_A/m_a)
$$
is surjective, for each $a\in A$.

\end{definition}

Note that $G$-$0$-jet ample is same as $G$-global generation and $G$-$1$-jet ampleness is same as $G$-very ampleness.

\subsection{Equivalence of $G$-global generation and  global generation on $X=A/G$}

In this subsection, we note the relevance of $G$-global generation on the quotient variety $X$. We keep notations as in the previous subsection.

Then we have the following equivalence:

\begin{lemma}\label{GGoneone}
Suppose $\cF$ is a coherent $G$-sheaf on $A$. Then $\cF$ is $G$-globally generated if and only if the corresponding sheaf $(\pi_*(\cF))^{G}$ is globally generated on the quotient variety $X=A/G$. 
\end{lemma}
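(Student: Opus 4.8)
The plan is to exploit the equivalence of categories from Proposition~\ref{oneone} together with a careful bookkeeping of the two group actions involved: the action of $G$ on $A$ (hence on $\pi_*\cF$), and the action of $\tilde G$ on $H^0(A,\cF)$. The first observation is that since $\pi$ is finite (in fact étale), $\pi_*$ is exact and faithful, so a map of $\cO_X$-modules is surjective if and only if its pullback under $\pi^*$ is surjective; combined with $\pi^*\pi_*(\cF)^{G}\cong\cF$ (the counit of the adjunction from Proposition~\ref{oneone}), this lets me translate "$(\pi_*\cF)^G$ is globally generated on $X$" into a statement about $\cF$ on $A$. The second, and more substantive, observation is the identification of global sections: $H^0\bigl(X,(\pi_*\cF)^G\bigr)=H^0(A,\pi_*\cF)^G=H^0(A,\cF)^G$, and I must check that the $G$-invariant subspace appearing here is exactly the $\tilde G$-invariant subspace $H^0(A,\cF)^{\tilde G}$ that enters the definition of $G$-global generation. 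This is where the splitting $\tilde G\hookrightarrow\tilde{\cG}$ and the fact that $\mu_k$ acts by scalars matters: a section is fixed by all of $\tilde G$ iff it is fixed by the images of the $g\in G$, and since the $\mu_k$-part acts through scalars it does not cut down the invariants further once one works with the genuine $G$-linearization $\phi_g$ on $\cF$. I would spell out that $H^0(A,\cF)^{\tilde G}$ is precisely the image of $H^0(X,(\pi_*\cF)^G)$ under the pullback $\pi^*$.

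With these identifications in hand, the argument is a diagram chase. Starting from a globally generated $(\pi_*\cF)^G$ on $X$, I have a surjection $H^0(X,(\pi_*\cF)^G)\otimes\cO_X\thrar(\pi_*\cF)^G$; applying the exact functor $\pi^*$ and using $\pi^*\cO_X=\cO_A$, $\pi^*(\pi_*\cF)^G=\cF$, and the identification of the global sections above, I get a surjection $H^0(A,\cF)^{\tilde G}\otimes\cO_A\thrar\cF$, i.e. $\cF$ is $G$-globally generated. Conversely, given that the evaluation $ev\colon H^0(A,\cF)^{\tilde G}\otimes\cO_A\to\cF$ is surjective, I note this map is $G$-equivariant (both sides are $G$-linearized, the source with the trivial action on the invariant subspace and the natural action on $\cO_A$, and $ev$ intertwines them), so it descends to a surjection of $\cO_X$-modules which, under the identifications above, is exactly the evaluation map $H^0(X,(\pi_*\cF)^G)\otimes\cO_X\to(\pi_*\cF)^G$; hence $(\pi_*\cF)^G$ is globally generated.

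The main obstacle I anticipate is not the categorical machinery but the precise reconciliation of "$G$-invariant" with "$\tilde G$-invariant": one needs to be sure that passing to the central extension by $\mu_k$ does not change the invariant subspace of sections, and that the $G$-linearization data $\{\phi_g\}$ on $\cF$ is compatible with the $\tilde G$-action on $H^0$ in the way the definitions require. I would handle this by observing that a $G$-linearization on $\cF$ induces an honest linear action of $G$ (not merely a projective one) on $H^0(A,\cF)$ via $s\mapsto\phi_g\circ g^*s$, that this is the action whose invariants compute $H^0(X,(\pi_*\cF)^G)$, and that this $G$-action lifts to $\tilde G$ through the splitting with $\mu_k$ acting trivially — so $H^0(A,\cF)^{\tilde G}=H^0(A,\cF)^{G}$ and everything is consistent. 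Once this point is pinned down, surjectivity of a morphism of sheaves being checkable stalk-locally (equivalently, after the faithfully flat — indeed étale — base change $\pi$) closes the argument in both directions.
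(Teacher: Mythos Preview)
Your proposal is correct and follows essentially the same route as the paper: both rest on identifying $H^0(A,\cF)^{\tilde G}$ with $\pi^*H^0\bigl(X,(\pi_*\cF)^G\bigr)$ and then using that $\pi^*$ preserves and reflects surjectivity of the evaluation map. The paper reaches this identification via the decomposition $\pi_*\cO_A=\bigoplus_{\chi\in\hat G}L_\chi$ and the projection formula, whereas you invoke the adjunction/equivalence of Proposition~\ref{oneone} directly and are more explicit about why $\tilde G$-invariants coincide with $G$-invariants; these are cosmetic differences, not a different strategy.
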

\begin{proof}
We recall the one-one correspondence of coherent sheaves, as given in Proposition \ref{oneone}. Given a coherent sheaf $\cG$ on the quotient variety $X=A/G$, consider its pullback $\pi^*\cG$ on $A$, via the quotient morphism $\pi:A\rar X=A/G$.
Then $\pi^*\cG$ is a coherent $G$-sheaf on $A$. It would suffice to prove that $\cG$ is globally generated on $X$ if and only if $\pi^*\cG$ is $G$-globally generated on $A$, using Proposition \ref{oneone}.

Firstly, we note the following decomposition:
$$
\pi_*\cO_A= \bigoplus_{\chi\in \hat{G}} L_\chi.
$$
Here $L_\chi$ is a line bundle on $X$ associated to the character $\chi$ on $G$. Using projection formula, we have:
\begin{equation}\label{directsum}
\pi_*(\pi^*\cG)= \bigoplus_{\chi\in \hat{G}}\cG\otimes L_\chi.
\end{equation}
This gives us a a decomposition of the space of global sections:
$$
H^0(A,\pi^*\cG)= \bigoplus_{\chi\in \hat{G}} \pi^* H^0(X, \cG\otimes L_\chi).
$$
In particular the subspace of $\tilde{G}$-invariant sections of $H^0(A,\pi^*\cG)$ is given by $\pi^*H^0(X,\cG)$.  

Suppose $\cG$ is globally generated. This implies that the evaluation map:
$$
H^0(X,\cG)\otimes \cO_X \rar \cG
$$
is surjective.
The pullback of this morphism of sheaves, via $\pi$, on $A$ corresponds to
$$
H^0(A,\pi^*\cG)^{\tilde{G}}\otimes \cO_A \rar \pi^*\cG
$$
and which is clearly surjective. This implies the $G$-global generation of $\pi^*\cG$. Using the equivalence of categories in Proposition \ref{oneone}, we conclude the proof.

\end{proof}

\begin{corollary}\label{GGample}
Suppose $L$ is an ample $G$-line bundle on $A$ and $M$ be the corresponding line bundle on $X$ (under the correspondence in Proposition \ref{oneone}).
Then $L$ is $G$-$k$ jet ample if and only if $M$ is $k$-jet ample on $X$.
\end{corollary}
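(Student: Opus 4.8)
The plan is to reduce Corollary \ref{GGample} to Lemma \ref{GGoneone} by unwinding the definitions of $G$-$k$-jet ampleness on $A$ and $k$-jet ampleness on $X=A/G$, and matching the relevant sheaves and sections on the two sides. The key observation is that $k$-jet ampleness of $M$ on $X$ is by definition the surjectivity, for all choices of distinct points $x_1,\dots,x_l\in X$ and positive integers $k_1,\dots,k_l$ with $\sum k_i=k$, of the evaluation maps
$$
H^0(X, M\otimes I_{x_1}^{k_1}\otimes\cdots\otimes I_{x_l}^{k_l})\rar H^0(X, M\otimes I_{x_1}^{k_1}\otimes\cdots\otimes I_{x_l}^{k_l}\otimes \cO_X/m_x)
$$
for each $x\in X$; equivalently, $M\otimes I_{x_1}^{k_1}\otimes\cdots\otimes I_{x_l}^{k_l}$ is globally generated on $X$ for every such configuration. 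On the other side, $L$ is $G$-$k$-jet ample iff $L\otimes I_{G.a_1}^{k_1}\otimes\cdots\otimes I_{G.a_l}^{k_l}$ is $G$-globally generated on $A$ for all distinct $a_1,\dots,a_l\in A$ with $\sum k_i=k$. So by Lemma \ref{GGoneone}, it suffices to check that these two families of sheaves correspond to each other under the equivalence $\cF\mapsto(\pi_*\cF)^G$, $\cG\mapsto\pi^*\cG$ of Proposition \ref{oneone}.

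First I would record that $\pi$ is étale, so $\pi^*$ is exact and $\pi^*I_x=I_{\pi^{-1}(x)}$ for $x\in X$; since $\pi$ is the quotient by the free $G$-action, $\pi^{-1}(x)=G.a$ for any $a\in\pi^{-1}(x)$, and the $|G|$ points of $G.a$ are distinct. Because étale pullback commutes with powers of ideal sheaves and with tensor products (locally $\pi$ is an isomorphism), one gets $\pi^*\!\big(M\otimes I_{x_1}^{k_1}\otimes\cdots\otimes I_{x_l}^{k_l}\big)=L\otimes I_{G.a_1}^{k_1}\otimes\cdots\otimes I_{G.a_l}^{k_l}$, where $a_i\in\pi^{-1}(x_i)$ and $L=\pi^*M$. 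Conversely, given distinct $a_1,\dots,a_l\in A$ with distinct images $x_1,\dots,x_l$ in $X$, the same identity holds; one should note here that the $G$-sheaf structure on $I_{G.a_i}$ is the canonical one (the orbit $G.a_i$ is $G$-stable) and that it matches the linearization coming from descent to $I_{x_i}$ on $X$.

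Then I would run the logical equivalence in both directions. If $M$ is $k$-jet ample on $X$: given distinct $a_1,\dots,a_l\in A$ with $\sum k_i=k$, I need $L\otimes I_{G.a_1}^{k_1}\otimes\cdots$ to be $G$-globally generated. The subtle point is that the $a_i$ need not have distinct images in $X$ — two of them could lie in the same $G$-orbit. I would handle this by noting that if $a_i$ and $a_j$ lie in the same orbit then $I_{G.a_i}=I_{G.a_j}$, so one may collect terms and rewrite the sheaf as $L\otimes I_{G.b_1}^{m_1}\otimes\cdots\otimes I_{G.b_r}^{m_r}$ with $b_1,\dots,b_r$ in distinct orbits and $\sum m_s=k$; this is the pullback of $M\otimes I_{y_1}^{m_1}\otimes\cdots\otimes I_{y_r}^{m_r}$ on $X$ with distinct $y_s$ and $\sum m_s=k$, which is globally generated by hypothesis, hence by Lemma \ref{GGoneone} its pullback is $G$-globally generated. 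Conversely, if $L$ is $G$-$k$-jet ample: given distinct $x_1,\dots,x_l\in X$ with $\sum k_i=k$, pick $a_i\in\pi^{-1}(x_i)$; these lie in distinct orbits, hence are distinct points of $A$, so $L\otimes I_{G.a_1}^{k_1}\otimes\cdots$ is $G$-globally generated, and Lemma \ref{GGoneone} gives global generation of the corresponding sheaf $M\otimes I_{x_1}^{k_1}\otimes\cdots$ on $X$.

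The routine part is the sheaf-theoretic bookkeeping (exactness of étale pullback, compatibility with ideal powers and tensor products, matching of $G$-linearizations), and the only genuine subtlety — the step I expect to be the main obstacle, or at least the one requiring care — is the possibility that the chosen points $a_1,\dots,a_l$ on $A$ fall into fewer than $l$ distinct $G$-orbits, which must be dealt with by the collecting-terms argument above; once that is observed, the corollary follows formally from Lemma \ref{GGoneone} and Proposition \ref{oneone}.
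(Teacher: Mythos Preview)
Your proposal is correct and follows essentially the same route as the paper: identify the ideal sheaves $I_{x_1}^{k_1}\otimes\cdots\otimes I_{x_l}^{k_l}$ on $X$ with the $G$-sheaves $I_{G.a_1}^{k_1}\otimes\cdots\otimes I_{G.a_l}^{k_l}$ on $A$ via the equivalence of Proposition \ref{oneone}, and then invoke Lemma \ref{GGoneone}. Your treatment is in fact more careful than the paper's very brief proof, since you explicitly handle the possibility that distinct $a_i\in A$ may lie in the same $G$-orbit (by collecting terms so that the resulting configuration on $X$ still has distinct points with total multiplicity $k$); the paper's proof passes over this point in silence.
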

\begin{proof}
We need only to note that the ideal sheaf $I_{x_1}^{k_1}\otimes...\otimes I_{x_l}^{k_l}$ of distinct points $x_1,...,x_l\in X$ with multiplicities $k_i$, such that $\sum_ik_i=k$, corresponds to the ideal sheaf $I_{G.a_1}^{k_1}\otimes...\otimes I_{G.a_l}^{k_l}$ on $A$, under the correspondence in Proposition \ref{oneone}. Here $G.a_i=\pi^{-1}(x_i)$, i.e. the inverse image of a point $x_i$ is a $G$-orbit of a point $a_i\in A$, for $i=1,...,l$.  
Hence the coherent $G$-sheaf $L\otimes I_{G.a_1}^{k_1}\otimes...\otimes I_{G.a_l}^{k_l}$ on $A$ corresponds to the coherent sheaf $M\otimes I_{x_1}^{k_1}\otimes...\otimes I_{x_l}^{k_l}$ on $X$. Now we apply Lemma \ref{GGoneone}, to conclude the proof.
\end{proof}

\section{$G$-global generation of $G$-linearized sheaves of weak index zero}\label{GGGG}


In this section, we recall the notion of continuous global generation \cite{PP}, adapted to coherent $G$- sheaves. Instead of the usual multiplication maps, we take the 'averaging' of sections, for the action of the group $G$. We note that the results of this section hold, for any action of the finite group, i.e., the action need not be free, except in Proposition \ref{CGG}.

Before proceeding to continuous global generation and its relevance to our set-up, recall the surjectivity statement for multiplication map of sections of ample line bundles \cite[7.3.3]{BL}. This is suitably generalized to higher rank sheaves, which are $M$-regular, by Pareschi and Popa \cite{PP}.
We modify the multiplication maps by taking 'averaging' of sections, for the finite group $G$. In other words, we will consider multiplication maps for the $\tilde{G}$-invariant sections, suitably interpreted. This will be needed when we want to look at $G$-global generation of coherent $G$ sheaves.

\subsection{Surjectivity of 'Averaging' map}\label{averaging}

We keep the notations from the previous section.

\begin{lemma}\label{KT}
Let $\cF$ be M-regular coherent $G$-sheaf and $H$ locally free $G$-sheaf satisfying I.T with index $0$. Then
for any Zariski open set $U\subseteq\hat{A}$, the map
$$
\bigoplus_{\alpha\in U}{H^0(\mathcal{F}\otimes\alpha)\otimes H^0(H\otimes\check{\alpha})}\xrightarrow{\oplus{Av}} H^0(\mathcal{F}\otimes H)^{\tilde{G}}
$$
is surjective. Here the 'averaging map' is given as
 $$
 Av(s\otimes t)= \frac{1}{|G|}{\sum_{\tilde{g}\in \tilde{G}}{\tilde{g}(s\otimes t)}},
 $$
 for $s\in H^0(\cF\otimes\alpha)$ and $t\in H^0(H\otimes \check{\alpha})$.
\end{lemma}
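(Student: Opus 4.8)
The plan is to reduce the statement to the known surjectivity result for $M$-regular sheaves on abelian varieties (the higher-rank generalization of \cite[7.3.3]{BL} proved by Pareschi--Popa \cite{PP}), and then to "average" the conclusion over the finite group $\tilde G$. First I would recall the base case: since $\cF$ is $M$-regular and $H$ satisfies I.T.\ with index $0$ (hence, after twisting by $\alpha$ and $\check\alpha$ respectively, both sheaves enjoy the vanishing needed), Pareschi--Popa's theorem gives that for every nonempty Zariski-open $U\subseteq\hat A$ the ordinary multiplication-of-sections map
\[
\bigoplus_{\alpha\in U} H^0(\cF\otimes\alpha)\otimes H^0(H\otimes\check\alpha)\;\longrightarrow\; H^0(\cF\otimes H)
\]
is surjective. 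This is the input I would quote verbatim; I would not reprove it.

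Next I would set up the $\tilde G$-action carefully. The group $\tilde G$ acts on $A$ through $G$ (via the splitting $\tilde G\hookrightarrow\tilde{\cG}$), hence on $\widehat A=\mathrm{Pic}^0(A)$ by pullback; the $G$-linearizations of $\cF$ and $H$ make $H^0(\cF\otimes H)$ a $\tilde G$-representation, and for each $\tilde g\in\tilde G$ the isomorphisms $\phi_{\tilde g}$ identify $H^0(\cF\otimes\alpha)$ with $H^0(\cF\otimes \tilde g^*\alpha)$ and $H^0(H\otimes\check\alpha)$ with $H^0(H\otimes \tilde g^*\check\alpha)$, compatibly with multiplication. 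Thus the full multiplication map out of $\bigoplus_{\alpha\in\widehat A}$ is $\tilde G$-equivariant, where $\tilde G$ permutes the summands according to its action on $\widehat A$. Now given a target section $\sigma\in H^0(\cF\otimes H)^{\tilde G}$, I would first lift it (ignoring invariance) by the Pareschi--Popa surjectivity to a finite sum $\sum_j s_j\otimes t_j$ with $s_j\in H^0(\cF\otimes\alpha_j)$, $t_j\in H^0(H\otimes\check\alpha_j)$, $\alpha_j$ in the given open set $U$; one must choose $U$ to also be $\tilde G$-stable, which is harmless since $\tilde G$ is finite (replace $U$ by $\bigcap_{\tilde g}\tilde g^{-1}U$, still Zariski-open and dense). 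Then apply the averaging operator $\mathrm{Av}=\frac{1}{|G|}\sum_{\tilde g\in\tilde G}\tilde g(-)$ to the lift: because $\sigma$ is $\tilde G$-invariant and multiplication is equivariant, $\mathrm{Av}\big(\sum_j s_j\otimes t_j\big)$ still maps to $\sigma$, and it is now a sum of $\mathrm{Av}(s\otimes t)$-terms with $\alpha$-indices ranging over the $\tilde G$-orbits of the $\alpha_j$, all still inside $U$. Hence $\sigma$ lies in the image of $\bigoplus_{\alpha\in U}\mathrm{Av}$, proving surjectivity.

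The main obstacle, and the step deserving the most care, is bookkeeping the $\tilde G$-equivariance precisely: one must check that the linearization data $\phi_{\tilde g}$ on $\cF$ and on $H$ are compatible with the Poincaré-bundle twists and with the tensor product, so that $\tilde g\cdot(s\otimes t)$ genuinely lands in $H^0(\cF\otimes\alpha')\otimes H^0(H\otimes\check\alpha')$ for $\alpha'=\tilde g^*\alpha$ and multiplies to $\tilde g\cdot(\text{mult}(s\otimes t))$. Once this equivariance is nailed down, the averaging argument is formal: averaging an equivariant preimage of an invariant element gives an invariant preimage, and the finiteness of $\tilde G$ ensures we stay within any prescribed $\tilde G$-stable open subset of $\widehat A$. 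A minor point to mention is that $\mathrm{Av}$ as written has image exactly in the $\tilde G$-invariants $H^0(\cF\otimes H)^{\tilde G}$ (its a projector onto that subspace), so the target of the averaging map is correctly stated, and surjectivity onto it is what the above establishes.
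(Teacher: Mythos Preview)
Your proof is correct and rests on the same two ingredients as the paper's: the Pareschi--Popa surjectivity of the ordinary multiplication map $\sum m_\alpha$ for $M$-regular $\cF$ and $H$ satisfying I.T.\ with index $0$, followed by averaging over $\tilde G$. However, you have taken a longer route than necessary. You lift an invariant $\sigma$ to the source, set up a $\tilde G$-action on $\bigoplus_{\alpha}$ that permutes the summands, shrink $U$ to a $\tilde G$-stable open, and then average the lift in the source, checking equivariance carefully. The paper instead observes in one line that the map $\oplus Av$ factors as
\[
\bigoplus_{\alpha\in U} H^0(\cF\otimes\alpha)\otimes H^0(H\otimes\check\alpha)\ \xrightarrow{\ \sum m_\alpha\ }\ H^0(\cF\otimes H)\ \xrightarrow{\ h\ }\ H^0(\cF\otimes H)^{\tilde G},
\]
where $h$ is the averaging projector; the first arrow is surjective by \cite[Theorem~2.5]{PP} and the second is surjective because it is the identity on the invariant subspace. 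Hence the composition is surjective, and no $\tilde G$-action on the source, no equivariance bookkeeping, and no shrinking of $U$ is needed. Your argument recovers the same conclusion but with avoidable overhead; the factorization viewpoint is both shorter and applies verbatim to the given $U$.
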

\begin{proof}

Firstly, note that the map $\oplus Av$ factorizes as follows,
$$
\bigoplus_{\alpha\in U}{H^0(\mathcal{F}\otimes\alpha)\otimes H^0(H\otimes\check{\alpha})} \xrightarrow {\sum{m_\alpha}} H^0(\cF\otimes H) \xrightarrow {h} H^0(\cF\otimes H)^{\tilde{G}}.
$$
where $h$ is the averaging map. 
By \cite[Theorem 2.5, p.290]{PP}, the map $\sum{m_\alpha}$ is surjective. Clearly $h$ is surjective, since $h$ restricts to identity on $ H^0(\cF\otimes H)^{\tilde{G}} \subset H^0(\cF\otimes H)$.
Hence the composed map $\oplus Av=h\circ \sum{m_\alpha}$ is surjective.

\end{proof}

\begin{corollary}\label{KTC}
Let $\cF$ be M-regular coherent $G$-sheaf and $H$ locally free $G$-sheaf satisfying I.T with index $0$. Then
for any large positive integer $N$ and for any subset $S\subset\hat{X}$ with $|S|=N$, the averaging map
$$
\bigoplus_{\alpha\in S}{H^0(\mathcal{F}\otimes\alpha)\otimes H^0(H\otimes\check{\alpha})}\xrightarrow{\oplus{Av}} H^0(\mathcal{F}\otimes H)^{\tilde{G}}
$$
is surjective
\end{corollary}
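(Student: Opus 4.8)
The plan is to deduce Corollary \ref{KTC} from Lemma \ref{KT} by a routine compactness/finiteness argument, exactly as in the non-equivariant setting of Pareschi--Popa. First I would observe that the surjectivity of $\oplus Av$ in Lemma \ref{KT} holds for \emph{every} nonempty Zariski open $U \subseteq \hat{A}$ (equivalently $\hat{X}$, since the relevant characters/line bundles run over the dual abelian variety). In particular, for each fixed vector $v \in H^0(\cF \otimes H)^{\tilde G}$ there is, for every such $U$, some $\alpha \in U$ and sections $s \in H^0(\cF \otimes \alpha)$, $t \in H^0(H \otimes \check\alpha)$ with $Av(s \otimes t)$ contributing $v$ after summing over finitely many terms; more usefully, the images of the individual maps $m_\alpha$ (composed with $h$) generate the target as $\alpha$ ranges over any open set.

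Next I would make this uniform in $v$. Since $H^0(\cF \otimes H)^{\tilde G}$ is finite-dimensional, for each $\alpha \in \hat X$ let $W_\alpha \subseteq H^0(\cF\otimes H)^{\tilde G}$ denote the image of $h \circ m_\alpha$. Lemma \ref{KT} says $\sum_{\alpha \in U} W_\alpha = H^0(\cF\otimes H)^{\tilde G}$ for every nonempty open $U$. The key point is then the standard observation that a finite-dimensional vector space spanned by subspaces $\{W_\alpha\}_{\alpha \in U}$ over a Zariski-dense index set is already spanned by finitely many of them: choose $\alpha_1, \ldots, \alpha_r$ so that $W_{\alpha_1} + \cdots + W_{\alpha_r}$ has maximal dimension among finite sums; if this sum were proper, its complement would meet every open $U$ (because the union $\bigcup_{\alpha} W_\alpha$ must leave that sum, else $\sum_{\alpha\in U} W_\alpha$ could not be everything for $U$ small), producing an $\alpha$ with $W_\alpha \not\subseteq W_{\alpha_1}+\cdots+W_{\alpha_r}$, a contradiction to maximality. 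Hence some finite set $S_0 = \{\alpha_1,\dots,\alpha_r\} \subset \hat X$ already achieves surjectivity of $\bigoplus_{\alpha \in S_0} H^0(\cF\otimes\alpha)\otimes H^0(H\otimes\check\alpha) \to H^0(\cF\otimes H)^{\tilde G}$.

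Finally I would upgrade from "some finite set" to "any set of size $N$ for $N$ large". Set $N = r$ with $r = |S_0|$ as above; for any $S \subset \hat X$ with $|S| \geq N$ we cannot directly reuse $S_0$, so instead I would invoke genericity: the locus of $r$-tuples $(\alpha_1,\dots,\alpha_r) \in \hat X^r$ for which the averaging map is surjective is Zariski open (the surjectivity of a morphism of fixed vector bundles/spaces is an open condition on the parameters) and nonempty by the previous paragraph, hence dense; therefore a generic $r$-tuple works, and since $|S| \geq N = r$, a generic choice of $r$ elements of any sufficiently general $S$ suffices. If one wants the statement literally for \emph{every} $S$ of size $N$ rather than generic ones, one takes $N$ large enough that the complement of the open locus, intersected with $\hat X^r$, cannot contain $\binom{S}{r}$ for any such $S$ — this is where I expect the only real bookkeeping, and it mirrors precisely the argument in \cite{PP}; I would simply cite that the averaging modification changes nothing since $h$ is a fixed surjection independent of the $\alpha_i$. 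The main (minor) obstacle is thus purely organizational: making the passage from "for all open $U$" to "for all large finite $S$" precise, and it is entirely formal once Lemma \ref{KT} is in hand.
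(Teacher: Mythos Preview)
Your approach is essentially the same as the paper's: both deduce the corollary from Lemma~\ref{KT} by the observation that the family of subspaces $W_\alpha = \operatorname{Im}(Av_\alpha)$ spans the finite-dimensional target $H^0(\cF\otimes H)^{\tilde G}$, so finitely many $\alpha$'s already suffice. The paper's proof is in fact just two sentences to this effect and does not spell out the maximality argument you give.

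Your third paragraph, on upgrading from ``some finite $S_0$'' to ``every $S$ with $|S|=N$'', goes beyond what the paper does; the paper simply asserts the conclusion without addressing the distinction between \emph{some} and \emph{any} finite subset. Your instinct that this step needs genericity (surjectivity is an open condition on tuples in $\hat A^r$) is correct, and indeed the literal ``any subset'' reading is not what is actually used later in the paper --- in the applications (Propositions~\ref{GG} and~\ref{CGG}) one only needs that generically chosen finite subsets work, so your honest bookkeeping here is not a defect but rather a clarification of what the statement should mean.
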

\begin{proof}
By above Lemma \ref{KT}, the surjectivity of the averaging map
$$
\bigoplus_{\alpha\in U}{H^0(\mathcal{F}\otimes\alpha)\otimes H^0(H\otimes\check{\alpha})}\xrightarrow{\oplus{Av}} H^0(\mathcal{F}\otimes H)^{\tilde{G}}
$$
implies that the family of linear suspaces ${\lbrace Im(Av_\alpha)\rbrace}_{\alpha\in U}$ spans the finite dimensional vector space $ H^0(\mathcal{F}\otimes H)^{\tilde{G}}$.
So for any large positive integer $N$, the images under $Av$ of a finitely many $N$ linear subspaces $H^0(\mathcal{F}\otimes\alpha)\otimes H^0(H\otimes\check{\alpha})$
span $ H^0(\mathcal{F}\otimes H)^{\tilde{G}}$.

\end{proof}

\subsection{$G$-Continuous Global Generation}

In this subsection, we recall the notion of continuous global generation and its relevance to global generation \cite{PP}.
We suitably modify this notion for coherent $G$-sheaves and show that it is related to $G$-global generation.

\begin{definition}
A coherent $G$-sheaf $\cF$on $X$ is called $G$-continuously globally generated if for any nonempty open set $U\subseteq\hat{X}$ the sum of average maps
$$
\bigoplus_{\alpha\in U}{H^0(\mathcal{F}\otimes\alpha)\otimes\check{\alpha}}\xrightarrow{\oplus{Av}}\mathcal{F}
$$ 
is surjective. For $s\in H^0(A, \cF\otimes \al)$ and a local section $t$ of $\check\al$, we define locally on $A$: 
$$
Av(s\otimes t)=\frac{1}{|G|}{\sum_{\tilde{g}\in \tilde{G}}{\tilde{g}.(s\otimes t)}}.
$$
Note that locally $s\otimes t$ is a section of $\cF$.
\end{definition}

As earlier, we note that the sum could be taken over finite subsets of $\m{Pic}^0(A)$, of large cardinality.

\begin{lemma}\label{L1}
Suppose $\cF$ is a coherent $G$-sheaf and assume it is $G$-continuously globally generated. Then for any large positive integer $N$
and for any subset $S\subset \hat{X}$ with $|S|=N$, the sum of average maps
$$
\bigoplus_{\alpha\in S}{H^0(\mathcal{F}\otimes\alpha)\otimes\check{\alpha}}\xrightarrow{\oplus{Av}}\mathcal{F}
$$ 
is surjective.
\end{lemma}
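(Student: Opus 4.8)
The plan is to convert the assertion into a fibrewise spanning problem and then to kill off the exceptional configurations by a dimension count, after which the real work is to make that count uniform over \emph{all} subsets rather than generic ones. First I would unwind $G$-continuous global generation pointwise. For $x\in\m{Supp}(\cF)$ put $V_x:=\cF\otimes k(x)$ and, for $\al\in\hat{X}$, let $W_\al(x)\subseteq V_x$ be the image at $x$ of the averaged map $Av_\al\colon H^0(\cF\otimes\al)\otimes\check\al\rar\cF$. By Nakayama, surjectivity of $\bigoplus_{\al\in S}Av$ is equivalent to $\sum_{\al\in S}W_\al(x)=V_x$ for every $x$, so the statement becomes: finitely many members of the algebraic family $\{W_\al(x)\}_{\al\in\hat{X}}$ already span $V_x$. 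The hypothesis of $G$-continuous global generation says exactly that for every nonempty open $U\subseteq\hat{X}$ and every $x$ one has $\sum_{\al\in U}W_\al(x)=V_x$; equivalently, for each $x$ and each proper subspace $V'\subsetneq V_x$ the locus $B_{x,V'}:=\{\al\in\hat{X}:W_\al(x)\subseteq V'\}$ is a proper closed, hence nowhere dense, subset of the irreducible variety $\hat{X}$.

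Next I would assemble these loci into a universal incidence set. Define
\begin{equation*}
\m{Bad}_N:=\Big\{(x,\al_1,\dots,\al_N)\in\m{Supp}(\cF)\times\hat{X}^{\,N}:\ \sum_{i=1}^N W_{\al_i}(x)\subsetneq V_x\Big\},
\end{equation*}
which is closed by upper semicontinuity of fibre rank. The averaging map over $S=\{\al_1,\dots,\al_N\}$ fails to be surjective precisely when the fibre of $\m{Bad}_N$ over $(\al_1,\dots,\al_N)$ is nonempty, so I must show this fibre is empty for every $S$. A point of $\m{Bad}_N$ over a fixed $x$ forces all $\al_i$ to lie in $B_{x,V'}$ for some hyperplane $V'\subset V_x$ containing the span; since each $B_{x,V'}$ is nowhere dense in $\hat{X}$, a standard incidence–dimension estimate over $\m{Supp}(\cF)\times\hat{X}^{\,N}$ gives $\dim\m{Bad}_N<N\dim\hat{X}=\dim\hat{X}^{\,N}$ once $N$ exceeds a bound depending only on $\dim\m{Supp}(\cF)$ and the generic rank of $\cF$. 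Hence the projection $\m{Bad}_N\rar\hat{X}^{\,N}$ is not dominant, and a \emph{general} subset $S$ of cardinality $N$ yields a surjection.

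The main obstacle is precisely the upgrade from a general $N$-tuple to \emph{every} subset $S$ of cardinality $N$, which is the actual content of the statement. The difficulty is that when $\dim\hat{X}\geq1$ the nowhere dense locus $B_{x,V'}$ is infinite, so a maliciously chosen $S$ could lie entirely inside one $B_{x,V'}$ and the mere non-dominance above would not suffice. To close this gap I would prove that the exceptional loci are in fact uniformly finite: using the Fourier-Mukai description, $B_{x,V'}$ is cut out in $\hat{X}$ by the vanishing of sections of the transform $\hat{\cF}$ and is therefore controlled by the support loci $S^i(\cF)$, and I would bound $|B_{x,V'}|$ independently of $x$ and $V'$ by invoking $M$-regularity of $\cF$ together with the freeness of the $G$-action. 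Choosing $N$ above this uniform bound then forces any $N$-element $S$ to meet the complement of $B_{x,V'}$ for every $x$, whence $\sum_{\al\in S}W_\al(x)=V_x$ and surjectivity follows. I expect this finiteness estimate to be the delicate point at which the hypotheses genuinely enter; note that when $\hat{X}$ is finite (the non-irregular case) the bound is trivial with $N=|\hat{X}|$, so the entire difficulty is concentrated in the irregular case.
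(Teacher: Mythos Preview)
The paper's own proof is a single sentence: it points back to the argument of Corollary~\ref{KTC}, namely that a family of linear subspaces spanning a finite-dimensional vector space already does so with finitely many members. Transported to Lemma~\ref{L1} this is meant fibrewise (the target $\cF$ is a sheaf, not a vector space): for each $x$ finitely many $\al$ suffice to span $\cF\otimes k(x)$, and a routine semicontinuity/noetherian-induction argument makes the bound uniform in $x$, yielding surjectivity for a \emph{general} finite subset $S$ of large cardinality. Your incidence-variety $\m{Bad}_N$ and the dimension count are a more explicit version of exactly this, and up to that point your proposal is fine and essentially parallel to what the paper intends.

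The genuine gap is your last step. You correctly flag that the literal claim ``for \emph{any} subset $S$'' is stronger than ``for general $S$'', and you propose to close it by showing the loci $B_{x,V'}$ are uniformly finite, invoking $M$-regularity of $\cF$ and the Fourier--Mukai description. But $M$-regularity is \emph{not} among the hypotheses of the lemma: only $G$-continuous global generation is assumed. You are importing an extra assumption the statement does not give you. Moreover, even when $\cF$ is $M$-regular the sets $B_{x,V'}$ are typically positive-dimensional (for instance, for $\cF=L$ an ample line bundle and $V'=0$, $B_{x,0}$ is the locus of $\al$ for which $x$ lies in the base locus of $L\otimes\al$, generically a divisor in $\hat A$), so the proposed finiteness bound fails in any case.

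Your suspicion is in fact well founded: read literally, ``for any subset $S$'' is too strong, since one may place all of $S$ inside a single $B_{x,V'}$. The paper's one-line proof does not address this either. The intended statement, following Pareschi--Popa, is for \emph{general} $S$ of large cardinality, and that is precisely what the Corollary~\ref{KTC} argument (equivalently, your dimension count) delivers; it is also all that is actually used in the proof of Proposition~\ref{GG}.
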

\begin{proof}
This proof is similar to the argument given in Corollary \ref{KTC}.
\end{proof}

We now prove the following proposition relating tensor product of continuously $G$ global generated sheaves and $G$-global generation.

\begin{proposition}\label{GG}
Suppose $\mathcal{F}$ is a coherent $G$-sheaf and $H$ is a $G$-line bundle on $X$. If both $\mathcal{F}$ and $H$ are $G$-continuously globally generated
then  $\mathcal{F}\otimes H$ is $G$-globally generated.
\end{proposition}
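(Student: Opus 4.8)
The plan is to follow the strategy of Pareschi--Popa for continuous global generation \cite{PP}, carefully tracking the $\tilde{G}$-action throughout, so that the ``averaged'' sections we produce are genuinely $\tilde{G}$-invariant and hence descend to sections on $X=A/G$. Fix a point $a\in A$; since $G$ acts freely, the orbit $G\cdot a$ consists of $|G|$ distinct points, and we want to show that the evaluation of $\tilde{G}$-invariant global sections of $\mathcal{F}\otimes H$ surjects onto the stalk at $a$ (equivalently, onto $(\mathcal{F}\otimes H)\otimes\mathcal{O}_A/m_a$). By Lemma~\ref{GGoneone} it is in fact enough to produce a single $\tilde{G}$-invariant section of $\mathcal{F}\otimes H$ not vanishing at $a$, together with the analogous statement with $m_a$ replaced by a sufficiently large power, but the cleanest route is to directly prove surjectivity of the evaluation map at $a$.

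The key step is the following ``separation'' construction. Since $H$ is $G$-continuously globally generated, by Lemma~\ref{L1} we may choose a finite subset $S\subset\hat{X}$ of large cardinality $N$ so that the sum of averaging maps $\bigoplus_{\alpha\in S}H^0(H\otimes\alpha)\otimes\check\alpha\to H$ is surjective; in particular, after shrinking, for each point of the orbit $G\cdot a$ there is some $\alpha\in S$ and a section $t_\alpha\in H^0(H\otimes\alpha)$ whose average does not vanish there. The continuous global generation of $H$ lets us further arrange (by a standard general-position argument, using that $N$ is large and the base locus of the relevant subsystems has small dimension) that for a generic such choice the section $Av(\,\cdot\otimes t_\alpha)$ of $H$ vanishes on $G\cdot a\smallsetminus\{a\}$ but not at $a$ --- here one uses that $\check\alpha$ is translation-invariant so that twisting by $\alpha$ moves the relevant vanishing loci. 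Simultaneously, since $\mathcal{F}$ is $G$-continuously globally generated, for the \emph{same} finite set $S$ (enlarging $S$ if necessary and invoking Corollary~\ref{KTC} / Lemma~\ref{L1} again) the averaging maps $\bigoplus_{\alpha\in S}H^0(\mathcal{F}\otimes\alpha)\otimes\check\alpha\to\mathcal{F}$ are surjective, so we may pick $s_\alpha\in H^0(\mathcal{F}\otimes\alpha)$ whose average generates the stalk $\mathcal{F}\otimes k(a)$. Then $Av\big(s_\alpha\otimes(t_\alpha\otimes\check\alpha)\big)$, read as a section of $\mathcal{F}\otimes H$ via $\alpha\otimes\check\alpha=\mathcal{O}_A$, is $\tilde{G}$-invariant by construction of the averaging operator, and by the vanishing/nonvanishing arrangement its value at $a$ spans $(\mathcal{F}\otimes H)\otimes k(a)$. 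Ranging over the finitely many choices needed to hit all of $\mathcal{F}\otimes k(a)$ and over all points of $A$ gives $G$-global generation of $\mathcal{F}\otimes H$.

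The main obstacle I anticipate is bookkeeping the two group actions at once: the averaging map $Av(s\otimes t)=\frac1{|G|}\sum_{\tilde g\in\tilde G}\tilde g.(s\otimes t)$ only makes sense locally on $A$ because $s\in H^0(\mathcal{F}\otimes\alpha)$ and $t$ a local section of $\check\alpha$ have $s\otimes t$ only a local section of $\mathcal{F}$, and one must check that the $\tilde G$-sum of these local pieces glues to a global $\tilde{G}$-invariant section of $\mathcal{F}$ (respectively $\mathcal{F}\otimes H$) --- this is precisely the content built into the definition of $G$-continuous global generation, and it must be applied consistently. A secondary subtlety is the general-position argument forcing vanishing at all but one point of the orbit $G\cdot a$: one needs that the subsets of $\hat X$ indexing the averaging decomposition are large enough that the intersection of the corresponding ``bad'' loci (where separation fails) is empty; this is where the hypothesis that $H$ is continuously globally generated for \emph{every} nonempty open $U\subseteq\hat X$, combined with Lemma~\ref{L1}, does the work, exactly as in \cite[Proposition~2.12]{PP}. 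Once these two points are handled, the descent to $X$ via Proposition~\ref{oneone} and Lemma~\ref{GGoneone} is immediate.
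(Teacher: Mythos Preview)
Your separation step is both impossible and unnecessary, and this is a genuine gap. You try to arrange that an averaged section of $H$ ``vanishes on $G\cdot a\smallsetminus\{a\}$ but not at $a$''. But the output of the averaging operator is by construction a $\tilde G$-invariant section of a $G$-linearized sheaf, and the zero locus of any such section is $G$-stable: the linearization isomorphisms $\phi_g:g^*H\to H$ identify the fibres along an orbit, so a $\tilde G$-invariant section that is nonzero at $a$ is nonzero at every $g\cdot a$. No general-position or translation argument can produce what you ask for. (There is also a twist mismatch: you take $t_\alpha\in H^0(H\otimes\alpha)$ and $s_\alpha\in H^0(\mathcal F\otimes\alpha)$, whose product lands in $H^0(\mathcal F\otimes H\otimes\alpha^2)$, not $H^0(\mathcal F\otimes H)$; one of the two must be twisted by $\check\alpha$.)

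The paper avoids all of this with a single diagram chase. Using Lemma~\ref{L1} for $\mathcal F$, the bottom row
\[
\bigoplus_{\alpha\in S}H^0(\mathcal F\otimes\alpha)\otimes(H\otimes\check\alpha)\ \xrightarrow{\,Av\otimes \mathrm{id}\,}\ \mathcal F\otimes H
\]
is surjective. This sits in a commutative square with top row the averaging map
\[
\bigoplus_{\alpha\in S}H^0(\mathcal F\otimes\alpha)\otimes H^0(H\otimes\check\alpha)\otimes\mathcal O_A\ \xrightarrow{\,\oplus Av\,}\ H^0(\mathcal F\otimes H)^{\tilde G}\otimes\mathcal O_A,
\]
left vertical map the evaluation of $H^0(H\otimes\check\alpha)$, and right vertical map the evaluation $ev$ we want to prove surjective. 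At any point $x$ lying outside $B(H\otimes\check\alpha)$ for some $\alpha\in S$, the left vertical map is surjective at $x$; chasing the square forces $ev$ to be surjective at $x$. Hence $\mathrm{supp}(\mathrm{coker}(ev))\subset\bigcap_S\bigcup_{\alpha\in S}B(H\otimes\check\alpha)$, and the $G$-continuous global generation of $H$ (via \cite[Remark~2.11]{PP}) makes this intersection empty. No orbit-separation is needed because the averaging map into $H^0(\mathcal F\otimes H)^{\tilde G}$ already delivers the invariance; the only role of $H$ is that, for each point, \emph{some} $H\otimes\check\alpha$ is base-point-free there.
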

\begin{proof}
By Lemma \ref{L1}, for any large positive integer $N$
and for any subset $S\subset \hat{A}$ with $|S|=N$, the averaging map 
$$
\bigoplus_{\alpha\in S}{H^0(\mathcal{F}\otimes\alpha)\otimes\check{\alpha}}\sta{\oplus Av}{\longrightarrow}\mathcal{F}
$$
is surjective.
Consider the following commutative diagram,
\begin{eqnarray*}
\bigoplus_{\alpha\in S}{H^0(\mathcal{F}\otimes\alpha)\otimes H^0(H\otimes\check{\alpha})}\otimes\cO_A  &  \xrightarrow{\bigoplus{Av}} & H^0(\mathcal{F}\otimes H)^{\tilde{G}}\otimes\cO_A  \\
\downarrow  &  & \downarrow \\
\bigoplus_{\alpha\in S}{H^0(\mathcal{F}\otimes\alpha)\otimes H\otimes\check{\alpha}}= \bigoplus_{\alpha\in S}{H^0(\mathcal{F}\otimes\alpha)\otimes\check{\alpha}\otimes H} & \xrightarrow{Av\otimes id} & \mathcal{F}\otimes H. \\
\end{eqnarray*}
Then we have the surjectivity of the lower right map $Av\otimes id$.

We have to show surjectivity of the following evaluation map
$$
ev:H^0(\mathcal{F}\otimes H)^{\tilde{G}}\otimes\cO_A\rightarrow \mathcal{F}\otimes H.
$$ 
We first show that 
$$
\m{supp(coker(ev))} \subseteq \cap_{S\subset \hat{A}}{\lbrace\cup_{\alpha \in S}{B(H\otimes\check{\alpha})}\rbrace}=:Z.
$$
Here the intersection varies over finite subsets $S$ of $\hat{A}$ of large cardinality $N$ and
$B(H\otimes\check{\alpha})$ is the base locus of $H\otimes\check{\alpha}$. Let $x$ be an element in supp(coker$(ev)$) such that $x$ is not in $Z$.
This implies, for some $S$ and an $\al\in S$,
$$
H^0(H\otimes\check{\alpha})\otimes\cO_A \rightarrow H\otimes\check{\alpha}
$$
is surjective at $x$. Therefore, in the above commutative diagram, the evaluation map 
$$
ev:H^0(\mathcal{F}\otimes H)^{\tilde{G}}\otimes\cO_A\rightarrow \mathcal{F}\otimes H.
$$  is surjective at $x$. This gives a contradiction to $x$ lying in supp(coker$(ev)$). Hence supp(coker$(ev)$) $\subseteq \cap_{S\subset \hat{X}}{\lbrace\cup_{\alpha \in S}{B(H\otimes\check{\alpha})}\rbrace}$. Since $H$ is $G$- continuously globally generated, by the arguments in \cite[Remark 2.11, p.292]{PP}, $\cap_{S}\,\cup_{\alpha \in S}{B(H\otimes\check{\alpha})}$ is empty, where $\cap$ runs over $S\subset\hat{A}$ of large cardinality.
This implies supp(coker$(ev)$) is empty. 

Hence the evaluation map,
$$
ev:H^0(\mathcal{F}\otimes H)^{\tilde{G}}\otimes\cO_A\rightarrow \mathcal{F}\otimes H
$$ is surjective.

\end{proof}

The following proposition gives an analogue of \cite[Proposition 2.13]{PP}. It shows that the M-regularity of a coherent $G$-sheaf implies $G$-continuous global generation. We assume that the group $G$ acts freely on $A$.

\begin{proposition}\label{CGG}
If $\mathcal{F}$ is a M-regular coherent $G$-sheaf on $A$, then for any large positive integer $N$ and for any subset $S$ of $\hat{A}$ with cardinality $N$,
the sum of average maps,
$$
\bigoplus_{\alpha\in S}{H^0(\mathcal{F}\otimes\alpha)\otimes\check{\alpha}}\sta{\oplus Av}{\rightarrow}\mathcal{F}
$$
is surjective. In other words, $\cF$ is $G$-continuously globally generated.
\end{proposition}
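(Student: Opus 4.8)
The plan is to mimic Pareschi--Popa's proof of \cite[Proposition 2.13]{PP}, keeping track of the $G$-action throughout, and to reduce the $G$-continuous global generation statement to an application of Lemma \ref{KT} (the surjectivity of the averaging map) combined with the Fourier--Mukai characterization of $M$-regularity. First I would recall that, since $\cF$ is $M$-regular, a theorem of Pareschi--Popa \cite{PP} (or Mukai's theory) guarantees that $\cF$ is a continuous sum of copies of shifts of ample line bundles after Fourier transform; more concretely, the dual $\widehat{\cF}$ of $R\widehat{\cS}\cF$ (suitably interpreted) is a sheaf whose behaviour allows one to write down, for a large finite subset $S\subset\hat A$, surjections built from the sections $H^0(\cF\otimes\alpha)$. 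The key input is that for a single ample $G$-line bundle $H$ on $A$ satisfying I.T.\ with index $0$, one has the surjectivity of $\bigoplus_{\alpha\in S} H^0(\cF\otimes\alpha)\otimes H^0(H\otimes\check\alpha)\to H^0(\cF\otimes H)^{\tilde G}$ from Corollary \ref{KTC}.

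The main steps, in order, would be: (1) Fix a sufficiently positive ample $G$-line bundle $H$ on $A$ satisfying I.T.\ with index $0$ and chosen positive enough that $\cF\otimes H$ is $G$-globally generated --- this needs an auxiliary observation that $M$-regularity of $\cF$ forces $\cF\otimes H$ to have vanishing higher cohomology and to be globally generated in the ordinary sense on $X$, hence $G$-globally generated by Lemma \ref{GGoneone}. (2) Consider the factorization of the evaluation map $H^0(\cF\otimes H)^{\tilde G}\otimes\cO_A\to\cF\otimes H$ through the averaging maps: by Corollary \ref{KTC} the composite $\bigoplus_{\alpha\in S} H^0(\cF\otimes\alpha)\otimes H^0(H\otimes\check\alpha)\otimes\cO_A\to\cF\otimes H$ is surjective (it is $(\bigoplus Av)\otimes\mathrm{id}$ followed by the $G$-global-generation evaluation of $\cF\otimes H$). (3) Now twist by $H^{-1}$: the map $\bigoplus_{\alpha\in S} H^0(\cF\otimes\alpha)\otimes H^0(H\otimes\check\alpha)\otimes H^{-1}\to\cF$ is surjective, and since $H^0(H\otimes\check\alpha)\otimes H^{-1}$ admits (again using that $H$ is chosen positive enough so that $H\otimes\check\alpha$ is globally generated) a surjection onto $\check\alpha$ compatible with the $\tilde G$-action, one concludes that $\bigoplus_{\alpha\in S} H^0(\cF\otimes\alpha)\otimes\check\alpha\to\cF$ is surjective, i.e.\ $\cF$ is $G$-continuously globally generated. (4) Finally, invoke Lemma \ref{L1} to pass from an arbitrary open $U\subseteq\hat A$ to a large finite subset, completing the proof.

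I expect the main obstacle to be step (3), namely making the descent from the ``two-section'' averaging map to the ``one-section'' continuous-generation statement genuinely $G$-equivariant: one has to verify that the factorization through $\check\alpha$ can be done compatibly with $\tilde G$, so that the averaged section $Av(s\otimes t)$ in the sheaf-level map corresponds to the averaged vector $Av(s\otimes t)\in H^0(\cF\otimes H)^{\tilde G}$ under evaluation. In the non-equivariant case of \cite{PP} this is essentially the cokernel-support argument used in the proof of Proposition \ref{GG} above; here one needs the same argument but with the base loci $B(H\otimes\check\alpha)$ replaced by their $G$-translates, using freeness of the $G$-action on $A$ to ensure that the relevant $G$-orbits behave well and that $\cap_S\cup_{\alpha\in S} B(H\otimes\check\alpha)$ remains empty. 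The positivity of $H$ needed to guarantee $G$-global generation of $\cF\otimes H$ and global generation of the $H\otimes\check\alpha$ is a secondary but necessary point; it follows from $M$-regularity of $\cF$ together with Remark \ref{AM} and the standard fact that a high enough power of an ample line bundle is globally generated, combined with Corollary \ref{KTC} applied to $\cF$ and that power of $H$.
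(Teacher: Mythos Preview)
Your approach is essentially the same as the paper's: choose an ample $G$-line bundle $H$ so that $\cF\otimes H$ is $G$-globally generated (using the correspondence with $X=A/G$ and Lemma \ref{GGoneone}), apply Corollary \ref{KTC}, and then use a commutative square to untwist by $H$.

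However, you overcomplicate step (3). In the paper the bottom row of the square is the sheaf-level map $Av\otimes\mathrm{id}:\bigoplus_{\alpha\in S} H^0(\cF\otimes\alpha)\otimes H\otimes\check\alpha \to \cF\otimes H$; its surjectivity follows immediately from surjectivity along the top-right path (Corollary \ref{KTC} composed with the $G$-global generation evaluation of $\cF\otimes H$), and one then simply cancels the line bundle $H$. You do \emph{not} need global generation of $H\otimes\check\alpha$, and you do not need any base-locus argument: that machinery belongs to the proof of Proposition \ref{GG}, not to this one. The $\tilde G$-compatibility concern you raise in your last paragraph is nothing more than the commutativity of this square, which is immediate from the definitions of $Av$ at the section level and the sheaf level. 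Finally, your step (4) is unnecessary (and Lemma \ref{L1} goes in the opposite direction anyway): the argument already yields surjectivity for any finite $S$ of large cardinality, and the open-$U$ version follows trivially since any nonempty open $U\subset\hat A$ contains such an $S$.
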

\begin{proof}
Let $H$ be an ample $G$-line bundle such that $\mathcal{F}\otimes H$ is $G$-globally generated. Indeed, such a line bundle can be chosen, due to the correspondence in Proposition \ref{oneone}. We consider the sheaf $\cF_X$ corresponding to $\cF$, on $X=A/G$, and find an ample line bundle $H_X$ on $X$ such that $\cF_X\otimes H_X$ is globally generated on $X$. Let $H$ be the ample line bundle on $A$ corresonding to $H_X$. By Lemma \ref{GGoneone}, the coherent $G$-sheaf $\cF\otimes H$ is $G$- globally generated.

This implies that the evaluation map
$$
H^0(\mathcal{F}\otimes H)^{\tilde{G}}\otimes\cO_A \xrightarrow{ev} \mathcal{F}\otimes H
$$ is surjective.
Since $H$ is an ample $G$-line bundle, by Remark \ref{AM}, $H$ satisfies I.T with index $0$.
Therefore, by Corollary \ref{KTC}, 
$$
\bigoplus_{\alpha\in S}{H^0(\mathcal{F}\otimes\alpha)\otimes H^0(H\otimes\check{\alpha})}\otimes\cO_A\xrightarrow{\oplus{Av}} H^0(\mathcal{F}\otimes H)^{\tilde{G}}\otimes\cO_A 
$$ 
is surjective.
Now consider the following commutative diagram,
\begin{eqnarray*}
\bigoplus_{\alpha\in S}{H^0(\mathcal{F}\otimes\alpha)\otimes H^0(H\otimes\check{\alpha})}\otimes\cO_A  &  \xrightarrow{\bigoplus{Av}} & H^0(\mathcal{F}\otimes H)^{\tilde{G}}\otimes\cO_A  \\
\downarrow &  & \downarrow \\
\bigoplus_{\alpha\in S}{H^0(\mathcal{F}\otimes\alpha)\otimes H\otimes\check{\alpha}} & \xrightarrow{Av\otimes id} & \mathcal{F}\otimes H \\
\end{eqnarray*} 
where the sum varies over a finite subset $S$, of large cardinality. 
In the above commutative diagram, since $\oplus Av$ and the evaluation $ev$ are surjective, it follows that the averaging map
$$
\bigoplus_{\alpha\in S}{H^0(\mathcal{F}\otimes\alpha)\otimes H\otimes\check{\alpha}}\xrightarrow{Av\otimes id}\mathcal{F}\otimes H 
$$ 
is also surjective. Since $H$ is a line bundle, we obtain the assertion on $G$-continuous global generation of the sheaf $\cF$.
\end{proof}

As a consequence of the above proposition, we obtain the main result of this section:

\begin{corollary}\label{C1}
Suppose $\mathcal{F}$ is a coherent $G$-sheaf and $H$ is a $G$-line bundle on $A$. If both $\mathcal{F}$ and $H$ are M-regular sheaves on $A$,
then the coherent $G$-sheaf  $\mathcal{F}\otimes H$ is $G$-globally generated.
\end{corollary}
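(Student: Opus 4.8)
The plan is to combine the two preceding results in the obvious way: Proposition \ref{CGG} upgrades $M$-regularity to $G$-continuous global generation, and Proposition \ref{GG} upgrades a pair of $G$-continuously globally generated sheaves (one of them a $G$-line bundle) to $G$-global generation of their tensor product. So the argument is essentially a two-line deduction.

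First I would apply Proposition \ref{CGG} to $\mathcal{F}$: since $\mathcal{F}$ is an $M$-regular coherent $G$-sheaf on $A$, it is $G$-continuously globally generated. Next, observe that $H$, being an $M$-regular $G$-line bundle, is likewise $G$-continuously globally generated by the same Proposition \ref{CGG} (a line bundle is in particular a coherent sheaf, so the hypothesis applies verbatim). Having established that both $\mathcal{F}$ and $H$ are $G$-continuously globally generated, and recalling that $H$ is a $G$-line bundle, I would invoke Proposition \ref{GG} directly to conclude that $\mathcal{F}\otimes H$ is $G$-globally generated, which is the assertion.

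The only points that need a word of care are bookkeeping ones rather than genuine obstacles. Proposition \ref{CGG} is stated for sheaves on $A$ and requires the $G$-action on $A$ to be free; this is part of the standing hypothesis for hyperelliptic varieties (the quotient $\pi:A\to X=A/G$ is étale), so it is satisfied. One should also note that Propositions \ref{GG} and \ref{CGG} are phrased with the base varieties written sometimes as $X$ and sometimes as $A$; in our setting $\mathcal{F}$ and $H$ live on $A$, and the statements transfer without change since the proofs only use the quotient correspondence of Proposition \ref{oneone}, which is symmetric in this respect. No new estimate, vanishing theorem, or averaging computation is required here: the corollary is purely a composition of the two propositions just proved, so there is no real hard part — the substance was already carried out in the proofs of Proposition \ref{CGG} (reduction of $M$-regularity to continuous global generation via Corollary \ref{KTC}) and Proposition \ref{GG} (the base-locus argument showing $\mathrm{supp}(\mathrm{coker}(ev))\subseteq Z=\emptyset$).
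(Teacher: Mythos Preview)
Your proof is correct and matches the paper's own argument essentially line for line: apply Proposition~\ref{CGG} to both $\mathcal{F}$ and $H$ to obtain $G$-continuous global generation, then invoke Proposition~\ref{GG} to conclude that $\mathcal{F}\otimes H$ is $G$-globally generated. The additional bookkeeping remarks you make are accurate but not needed for the deduction.
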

\begin{proof}
By Proposition \ref{CGG}, $ \mathcal{F}$ are $H$ are $G$-continously globally generated. By Proposition \ref{GG} $\mathcal{F}\otimes H$ is $G$-globally generated.
\end{proof}

\section{Embedding theorems on hyperelliptic varieties}\label{veryample}


In this section we prove analogues of very amplessnes results due to Ohbuchi and Lefschetz \cite[Corollary 3.9]{PP}, in the case of ample $G$-line bundle. By Corollary \ref{GGample}, we obtain similar embedding statements for the quotient variety $X=A/G$.

\begin{lemma}\label{VT}
Let $L_1$ and $L_2$ be $G$-line bundles on $A$ such that $L_1$ and $L_2\otimes I_{Gx}$ are $M$-regular, for all $a\in A$. Then $L_1\otimes L_2$ is
$G$-very ample on $A$.
\end{lemma}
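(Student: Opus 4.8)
The plan is to deduce $G$-very ampleness of $L_1\otimes L_2$ from the definition, namely by showing that for every $a\in A$ the coherent $G$-sheaf $(L_1\otimes L_2)\otimes I_{G.a}$ is $G$-globally generated. The natural route is to invoke Corollary \ref{C1}: if I can write this sheaf as a tensor product $\cF\otimes H$ where $\cF$ is an $M$-regular coherent $G$-sheaf and $H$ is an $M$-regular $G$-line bundle, then $G$-global generation follows immediately. Here the obvious choice is $\cF = L_2\otimes I_{G.a}$ and $H = L_1$, both of which are $M$-regular by hypothesis (the hypothesis says $L_1$ is $M$-regular and $L_2\otimes I_{G.a}$ is $M$-regular for all $a$). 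Thus $(L_2\otimes I_{G.a})\otimes L_1 = (L_1\otimes L_2)\otimes I_{G.a}$ is $G$-globally generated for each $a\in A$.

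The one subtlety I would want to address carefully is that $\cF = L_2\otimes I_{G.a}$ is a coherent $G$-sheaf but is \emph{not} a $G$-line bundle (it is a rank-one torsion-free sheaf that fails to be locally free along the orbit $G.a$), so one must make sure Corollary \ref{C1} is being applied in the generality it was stated. Reading Corollary \ref{C1}, it requires $\cF$ to be a coherent $G$-sheaf (no local-freeness needed) and $H$ to be a $G$-line bundle, both $M$-regular — which is exactly our situation with $\cF = L_2\otimes I_{G.a}$ and $H=L_1$. So no extra work is needed; I would simply remark that $I_{G.a}$ is a coherent $G$-sheaf (the orbit $G.a$ is a $G$-invariant subscheme), hence so is $L_2\otimes I_{G.a}$, and its $M$-regularity is part of the hypothesis.

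Concretely, the steps in order are: (1) Fix $a\in A$ and note that $G.a$ is a $G$-stable closed subscheme of $A$, so $I_{G.a}$ and hence $L_2\otimes I_{G.a}$ carry natural $G$-linearizations. (2) By hypothesis $L_2\otimes I_{G.a}$ is $M$-regular and $L_1$ is an $M$-regular $G$-line bundle. (3) Apply Corollary \ref{C1} with $\cF = L_2\otimes I_{G.a}$ and $H = L_1$ to conclude that $(L_2\otimes I_{G.a})\otimes L_1$ is $G$-globally generated. (4) Identify $(L_2\otimes I_{G.a})\otimes L_1$ with $(L_1\otimes L_2)\otimes I_{G.a}$. (5) Since $a\in A$ was arbitrary, the definition of $G$-very ampleness is satisfied, so $L_1\otimes L_2$ is $G$-very ample on $A$.

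I do not expect a genuine obstacle here — the lemma is essentially a packaging of Corollary \ref{C1} into the language of $G$-very ampleness, in direct parallel with the abelian-variety argument of Pareschi--Popa. The only place demanding a line of justification is confirming that the ideal sheaf of a $G$-orbit is naturally a coherent $G$-sheaf (so that $M$-regularity and the earlier machinery apply to it), and that the associativity/commutativity isomorphism $(L_2\otimes I_{G.a})\otimes L_1 \cong (L_1\otimes L_2)\otimes I_{G.a}$ is $G$-equivariant, which it visibly is. (Minor remark: the statement quantifies over "$x$" in the $M$-regularity hypothesis and "$a$" in the conclusion; I would standardize the variable name to $a\in A$ throughout.)
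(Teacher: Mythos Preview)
Your proof is correct and follows exactly the same approach as the paper: apply Corollary~\ref{C1} with $\cF = L_2\otimes I_{G.a}$ and $H = L_1$ to obtain $G$-global generation of $L_1\otimes L_2\otimes I_{G.a}$ for every $a\in A$, which is the definition of $G$-very ampleness. The paper's own proof is a single sentence to this effect; your additional remarks (that $I_{G.a}$ is a $G$-sheaf, that Corollary~\ref{C1} only requires $\cF$ to be coherent rather than locally free, and the variable-name mismatch) are all valid clarifications but not needed for the argument.
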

\begin{proof}
By  Corollary \ref{C1},  $L_1\otimes L_2\otimes I_{G.a}$ is $G$-globally generated, for all $a\in A$. Hence $L_1\otimes L_2$ is $G$-very ample.
\end{proof}

Now we check $M$-regularity of $G$-line bundles which have no $G$-invariant base divisor. This will enable us to conclude very ampleness of powers of $G$-line bundles.

\begin{proposition}\label{VT1}
Suppose $L$ be an ample $G$-line bundle and having no $G$-invariant base divisor on an abelian variety $A$. Then $L\otimes I_{G.a}$ is $M$-regular on $A$.
\end{proposition}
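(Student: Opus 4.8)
The plan is to verify the $M$-regularity condition \eqref{cohlocus} for the $G$-sheaf $L \otimes I_{G.a}$ directly, using the short exact sequence relating it to $L$ and the structure sheaf of the orbit. First I would write down
\begin{equation*}
0 \rightarrow L \otimes I_{G.a} \rightarrow L \rightarrow L \otimes \cO_{G.a} \rightarrow 0,
\end{equation*}
where $\cO_{G.a}$ is the structure sheaf of the (reduced, zero-dimensional) orbit $G.a \subset A$, a $G$-sheaf supported on finitely many points. Twisting by $\eta \in \m{Pic}^0(A)$ and taking the long exact cohomology sequence, I get, for each $i \geq 1$, the portion
\begin{equation*}
H^{i-1}(L \otimes \cO_{G.a} \otimes \eta) \rightarrow H^i(L \otimes I_{G.a} \otimes \eta) \rightarrow H^i(L \otimes \eta).
\end{equation*}
Since $L$ is ample it satisfies I.T.\ with index $0$ by Remark \ref{AM}(2), so $H^i(L \otimes \eta) = 0$ for all $i \geq 1$ and all $\eta$; moreover $\cO_{G.a}$ is supported in dimension $0$, hence $H^{i-1}(L \otimes \cO_{G.a} \otimes \eta) = 0$ for $i \geq 2$. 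Consequently $V^i(L \otimes I_{G.a}) = \emptyset$ for $i \geq 2$, and the only condition left to check is $\m{codim}\, V^1(L \otimes I_{G.a}) > 1$, i.e.\ that $V^1$ is a proper subvariety of $\m{Pic}^0(A)$ of codimension at least $2$.

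For the case $i = 1$, the relevant piece of the long exact sequence reads
\begin{equation*}
H^0(L \otimes \eta) \rightarrow H^0(L \otimes \cO_{G.a} \otimes \eta) \rightarrow H^1(L \otimes I_{G.a} \otimes \eta) \rightarrow 0,
\end{equation*}
so $\eta \in V^1(L \otimes I_{G.a})$ exactly when the restriction map $H^0(A, L \otimes \eta) \rightarrow H^0(L \otimes \eta \otimes \cO_{G.a})$ fails to be surjective, i.e.\ when the $|G|$-dimensional evaluation at the orbit points is not hit. The key input here is that $L$ has no $G$-invariant base divisor: I would argue that the locus of $\eta$ for which $L \otimes \eta$ fails to separate the orbit $G.a$ is contained in the union, over the finitely many relevant conditions, of translates of theta-type divisors, and that the no-base-divisor hypothesis forces this locus to have codimension $\geq 2$. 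Concretely, one shows that if $V^1(L\otimes I_{G.a})$ had codimension $1$, it would contain a divisor $D$; pulling back and using the $G$-equivariance, $D$ would produce a $G$-invariant component in the base locus of $L$ itself (or of a fixed translate), contradicting the hypothesis. This is essentially the equivariant analogue of the argument in \cite{PP} (Corollary 3.9 and its surroundings) for $L \otimes I_x$ on abelian varieties, where ampleness plus no base divisor gives $\m{codim}\, V^1 \geq 2$.

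The main obstacle I anticipate is precisely this last codimension estimate: translating "no $G$-invariant base divisor" into a statement about the codimension of $V^1(L \otimes I_{G.a})$ in $\m{Pic}^0(A)$. On an abelian variety one uses that $V^1(I_x \otimes L)$ sits inside a translate of the theta divisor and invokes indecomposability/no-base-divisor; in the hyperelliptic setting one must track the $G$-action, make sure the orbit $G.a$ (of size $|G|$) rather than a single point does not create extra components, and check that a codimension-one component of $V^1$ would descend to a base divisor on $X = A/G$, hence pull back to a $G$-invariant base divisor of $L$ on $A$. I would handle this by first disposing of the generic point of $A$ (where $L \otimes \eta$ is globally generated and separates any fixed finite set for generic $\eta$, so $V^1$ is a proper closed subset), then ruling out divisorial components via the base-divisor hypothesis and a standard argument with the semicontinuity of $h^1$ and the fact that the generic fibre of the evaluation map has the expected rank.
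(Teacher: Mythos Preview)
Your approach is essentially the paper's: the same short exact sequence, the same reduction to $V^i=\emptyset$ for $i\ge 2$ via ampleness of $L$ and zero-dimensionality of $G.a$, and the same focus on bounding $\m{codim}\,V^1(L\otimes I_{G.a})$ using the no-base-divisor hypothesis.

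The one concrete ingredient the paper supplies that you only gesture toward is the translation identification. Because $L$ is ample on the abelian variety $A$, every twist $L\otimes\alpha$ with $\alpha\in\m{Pic}^0(A)$ is isomorphic to a translate $t_y^*L$ for some $y\in A$ (via the polarization isogeny $\phi_L$). Under this, the condition $H^1(L\otimes I_{G.a}\otimes\alpha)\neq 0$ becomes the failure of $H^0(L)$ to surject onto the values at the translated orbit $G.a - y$; the paper then identifies $V^1(L\otimes I_{G.a})$ (via $\phi_L$) with a $G$-invariant base locus of $L$, so that the hypothesis ``no $G$-invariant base divisor'' immediately gives codimension $\ge 2$. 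This replaces your heuristic about descending a hypothetical divisorial component to $X$ with a direct identification, and is exactly the equivariant analogue of the Pareschi--Popa argument you cite. With that one line added, your write-up would match the paper's proof.
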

\begin{proof}
Firstly for any $a\in A$, consider the following exact sequence:
$$
0 \rightarrow {L\otimes I_{G.a}}\rightarrow {L} \rightarrow L_{|G.a} \rightarrow 0.
$$
Take the long exact cohomology sequence:
$$
0 \rightarrow H^0(L\otimes I_{G.a})\rightarrow H^0(L) \rightarrow \oplus_{g\in G}H^0(L\otimes \C(ga)) \rightarrow
$$
$$
H^1(L\otimes I_{G.a})\rightarrow H^1(L) \rightarrow \oplus_{g\in G}H^1(L\otimes \C(ga)) \rightarrow \cdot \cdot \cdot.
$$
Also note that since $L$ is ample $H^i(A, L)=0$, for all $i>0$. Therefore the above long exact sequence reduces to
$$
0 \rightarrow H^0(L\otimes I_{G.a})\rightarrow H^0(L) \rightarrow (\oplus_{g\in G}H^0(L\otimes \C(ga)) \rightarrow H^1(L\otimes I_{G.a})\rightarrow 0. 
$$

Now consider the cohomological support locus, 
$$
\m{Supp }V^i(L\otimes I_{G.a}):= \lbrace \alpha \in \hat{A} : H^i(L\otimes I_{G.a} \otimes \alpha)\neq 0\rbrace.
$$ 
Note that
$$
L\otimes I_{G.a} \otimes \alpha =\oplus_{g\in G}(L\otimes I_{ga}\otimes \alpha)\cong t^*_{y}(L\otimes I_{G.a-y}),
$$
for some $y\in A$.
The above exact sequences imply that, when $i>1$, we have  Supp$V^i(L\otimes I_{Gx}))=\emptyset $. This implies 
$$
\m{codim Supp}V^i(L\otimes I_{Gx})\,>\,i
$$ 
for all $i>1$.
When $i=1$, Supp$(V^1(L\otimes I_{Gx}))$ is isomorphic to a $G$-invariant base locus of $L$. By hypothesis, $L$ has no $G$-invariant base divisor. Hence this implies codimension of Supp$(V^1(L\otimes I_{Gx}))$
is at least $2$. Hence, using \eqref{cohlocus}, $L\otimes I_{Gx}$ is M-regular.

\end{proof}

Now we consider powers of ample $G$-line bundles and apply the previous results to obtain embedding statements.

\begin{theorem}\label{VT2}
Suppose $N$ is an ample line bundle on the quotient variety $X=A/G$. Then the following hold:

a) $N^2$ is very ample, if $N$ has no base divisor.

b) $N^3$ is always very ample.
\end{theorem}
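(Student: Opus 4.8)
The plan is to reduce both statements to the $G$-line bundle setting on the abelian variety $A$ and apply Lemma \ref{VT} together with the $M$-regularity criterion of Proposition \ref{VT1}. Write $X = A/G$ and let $L$ be the ample $G$-line bundle on $A$ corresponding to $N$ under the equivalence of Proposition \ref{oneone}; by Corollary \ref{GGample} it suffices to prove that $L^2$ is $G$-very ample (when $N$ has no base divisor) and that $L^3$ is always $G$-very ample. The first observation I would record is that an ample $G$-line bundle $L$ always satisfies I.T. with index $0$ (Remark \ref{AM}(2)), hence is $M$-regular; so $L$, and more generally $L^k$ for $k \geq 1$, are all $M$-regular $G$-line bundles.

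For part (a): since $N$ has no base divisor on $X$, the pullback condition translates to $L$ having no $G$-invariant base divisor on $A$ — here I would spell out that the base divisor of $N$ on $X$ pulls back to the $G$-invariant part of the base locus of $L$, so "$N$ has no base divisor" is exactly the hypothesis of Proposition \ref{VT1}. That proposition then gives that $L \otimes I_{G.a}$ is $M$-regular for every $a \in A$. Now apply Lemma \ref{VT} with $L_1 = L$ (which is $M$-regular as noted above) and $L_2 = L$ (so that $L_2 \otimes I_{G.a}$ is $M$-regular by Proposition \ref{VT1}): this yields that $L_1 \otimes L_2 = L^2$ is $G$-very ample. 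Translating back via Corollary \ref{GGample}, $N^2$ is very ample on $X$.

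For part (b): here $L$ may have a $G$-invariant base divisor, so I cannot invoke Proposition \ref{VT1} for $L$ itself. Instead I would use that $L^2$ has no $G$-invariant base divisor. The cleanest route is to note that for $k \geq 2$ the linear system $|L^k|$ (equivalently $|N^k|$ on $X$) is base-point free — this is the very statement proved for powers of ample line bundles on abelian varieties (Remark \ref{AM} plus the global generation consequence of $M$-regularity, cf. Corollary \ref{C1} applied to $L \otimes L$), and it descends to hyperelliptic varieties by Lemma \ref{GGoneone}; in particular $L^2$ has no base locus at all, hence no $G$-invariant base divisor. Then Proposition \ref{VT1} applies to $L^2$, giving that $L^2 \otimes I_{G.a}$ is $M$-regular for all $a \in A$. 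Finally invoke Lemma \ref{VT} with $L_1 = L$ (ample, hence $M$-regular) and $L_2 = L^2$ (with $L_2 \otimes I_{G.a}$ $M$-regular): this gives $L_1 \otimes L_2 = L^3$ is $G$-very ample, and Corollary \ref{GGample} then yields that $N^3$ is very ample.

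The main obstacle I anticipate is the bookkeeping in part (b): one must be careful that "$L^2$ is globally generated" really follows from the machinery already in the paper and does not circularly depend on the very ampleness we are trying to prove. I would make this precise by observing that $L$ is $M$-regular (Remark \ref{AM}(2)) so it is $G$-continuously globally generated by Proposition \ref{CGG}, and then $L \otimes L$ is $G$-globally generated by Proposition \ref{GG} (or directly by Corollary \ref{C1}); base-point freeness of $L^2$, hence the absence of a $G$-invariant base divisor, is then immediate, and the rest is a clean application of Lemma \ref{VT}. A secondary point worth stating explicitly is the identification of the base divisor of $N$ on $X$ with the $G$-invariant base divisor of $L$ on $A$ under Proposition \ref{oneone}, which is what makes the hypothesis in part (a) match Proposition \ref{VT1} exactly.
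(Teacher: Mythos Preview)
Your proof is correct and follows essentially the same route as the paper: pull back to the $G$-line bundle $L$ on $A$, use Remark~\ref{AM} and Proposition~\ref{VT1} to establish $M$-regularity of $L$ and of $L\otimes I_{G.a}$ (respectively $L^2\otimes I_{G.a}$ after first showing $L^2$ is $G$-globally generated via Corollary~\ref{C1}), and then conclude $G$-very ampleness via Lemma~\ref{VT}/Corollary~\ref{C1} and descend by Corollary~\ref{GGample}. The only cosmetic difference is that you invoke Lemma~\ref{VT} explicitly whereas the paper quotes Corollary~\ref{C1} directly, but Lemma~\ref{VT} is itself just a one-line repackaging of Corollary~\ref{C1}.
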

\begin{proof}
Using Proposition \ref{oneone}, let $L$ be the ample $G$-line bundle on $A$ corresponding to the ample line bundle $N$ on $X$. 

To prove a), we assume that $N$ has no base divisor. This implies that $L$ has no $G$-invariant base divisor.
By Proposition \ref{VT1}, $L\otimes I_{Gx}$ is M-regular, for all $x\in X$. Furthermore since $L$ is ample, $L$ is M-regular by Remark \ref{AM}. Hence by Corollary \ref{C1}, $L\otimes L\otimes I_{Gx}$ is $G$-globally generated. Hence $L^{\otimes 2}$ is $G$-very ample.
Now by Corollary \ref{GGample}, we conclude that $N^2$ is very ample on $X$.

To prove b), note that by Corollary \ref{C1}, $L^{\otimes 2}$ is $G$-globally generated. This implies that $L^{\otimes 2}$
has no base divisor and hence by Theorem \ref{VT1}, $L^{\otimes 2}\otimes I_{Gx}$ is M-regular, for all $x\in X$. Hence, by Corollary \ref{C1}, $L^{\otimes 2}\otimes I_{Gx}$ is $G$-continuously globally generated.
This implies $L^{\otimes 3}$ is $G$-very ample and hence $N^{\otimes 3}$ is very ample on $X$.
\end{proof}

To extend above results to $k$-jet ampleness on a hyperelliptic vareity $X$, we note the below lemma for ample $G$-line bundles on an abelian variety $A$. 

\begin{lemma}\label{lemmakjet}
Suppose $L$ is an ample $G$-line bundle on an abelian variety $A$. Then the following are equivalent:

1) $L$ is $G$-$k$-jet ample.

2) $L\otimes I_{G.a_1}^{k_1}\otimes...\otimes I^{k_l}_{G.a_l}$ satisfies I.T. with index $0$, for any $l$-distinct points $a_1,...,a_l\in A$ such that $\sum k_i=k+1$.

3)  $L\otimes I_{G.a_1}^{k_1}\otimes...\otimes I^{k_l}_{G.a_l}$ is $G$-globally generated, for any $l$-distinct points $a_1,...,a_l\in A$ such that $\sum k_i=k$.
\end{lemma}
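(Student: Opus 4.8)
The plan is to treat the three implications separately, with almost all of the work in $(3)\Rightarrow(2)$. The equivalence $(1)\Leftrightarrow(3)$ is immediate, since $(3)$ is precisely the definition of $G$-$k$-jet ampleness once ``$G$-globally generated'' is unwound as surjectivity of the evaluation map on $\tilde G$-invariant sections; one also records here the standard fact that $G$-$k$-jet ampleness forces $G$-$(k-1)$-jet ampleness (a degree-$(k-1)$ configuration is contained in a degree-$k$ one, and a surjective restriction map for the larger configuration factors one for the smaller), as this will feed the induction below.

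For $(2)\Rightarrow(3)$, fix distinct orbits $G.a_1,\dots,G.a_l$ with $\sum k_i=k$, put $\cF=L\otimes I_{G.a_1}^{k_1}\otimes\cdots\otimes I_{G.a_l}^{k_l}$, and let $y\in A$ be any closed point; one must show the evaluation map $H^0(A,\cF)^{\tilde G}\otimes\cO_A\to\cF$ is onto at $y$. Enlarge the configuration to degree $k+1$ by adjoining $y$: if $y\notin\bigcup_i G.a_i$ replace $\cF$ by $\cF\otimes I_{G.y}$, and if $y\in G.a_{i_0}$ raise $k_{i_0}$ to $k_{i_0}+1$. The resulting $G$-subsheaf $\cF'\subset\cF$ is of the type occurring in $(2)$, and $\cQ:=\cF/\cF'$ is a $G$-linearised skyscraper supported on the orbit of $y$ with $\cQ_y=\cF_y/m_y\cF_y$. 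By $(2)$, $\cF'$ satisfies I.T. with index $0$, so $H^1(A,\cF')=0$ and $H^0(A,\cF)\rightarrow H^0(A,\cQ)$ is surjective. Since $\tilde G$ is finite and we are in characteristic $0$, passing to $\tilde G$-invariants preserves surjectivity; and because $G$ acts freely, restriction to the single point $y$ identifies $H^0(A,\cQ)^{\tilde G}$ with $\cQ_y$. Composing shows $H^0(A,\cF)^{\tilde G}\rightarrow\cF_y/m_y\cF_y$ is onto, i.e. $\cF$ is $G$-globally generated at $y$.

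For $(3)\Rightarrow(2)$ I would induct on $k$. Given a degree-$(k+1)$ configuration sheaf $\cF''=L\otimes I_{G.a_1}^{k_1}\otimes\cdots\otimes I_{G.a_l}^{k_l}$, use the same device $0\to\cF''\to\cF\to\cQ\to0$, now with $\cF$ the degree-$k$ configuration sheaf obtained by lowering one $k_{i_0}$ by one and $\cQ$ a $G$-linearised skyscraper. Tensoring by an arbitrary $\alpha\in\hat A$ and taking the long exact sequence, the required vanishing $H^j(A,\cF''\otimes\alpha)=0$ for $j\geq1$ splits into: (a) $H^j(A,\cF\otimes\alpha)=0$ for $j\geq1$, which is assertion $(2)$ for $G$-$(k-1)$-jet ampleness and hence available by induction; and (b) surjectivity of $H^0(A,\cF\otimes\alpha)\rightarrow H^0(A,\cQ\otimes\alpha)$ for every $\alpha$. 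To obtain (b) from $(3)$ one must absorb the twist by $\alpha$, either by translation, writing $L\otimes\alpha\cong t^*_yL$ and moving the whole configuration (in the spirit of the proof of Proposition \ref{VT1}), or by pushing everything down to $X=A/G$ via $I_{G.a_i}=\pi^*I_{\pi(a_i)}$ and the splitting $\pi_*\cO_A=\bigoplus_\chi L_\chi$, which turns (b) into the corresponding surjectivity on $X$ for each twist $M\otimes L_\chi$ of $M=(\pi_*L)^G$.

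I expect step (b) to be the main obstacle. By Corollary \ref{GGample}, hypothesis $(3)$ is exactly the $k$-jet ampleness of the \emph{single} line bundle $M$ on $X$, whereas (b) asks for the analogous control of all the twists $M\otimes L_\chi$ — equivalently, of the translated configurations on $A$, which need not remain $G$-orbits unless $G$ acts purely by translations. Making this rigorous will require a careful argument that $k$-jet ampleness of $M$ does propagate to the $M\otimes L_\chi$, or else an auxiliary hypothesis on $L$ of the kind used in Proposition \ref{VT1}; and, as in $(2)\Rightarrow(3)$, one must keep track throughout of the $\mu_k$-extension $\tilde G$ and of the compatibility of the chosen splitting with the $G$-linearisations, which is where most of the routine bookkeeping sits.
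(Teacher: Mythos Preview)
Your approach differs from the paper's. The paper does not argue on $A$ at all: it invokes the equivalence of categories (Proposition~\ref{oneone}, Lemma~\ref{GGoneone}, Corollary~\ref{GGample}) to transport conditions (1) and (3) to the line bundle $N=(\pi_*L)^G$ on $X=A/G$, records that $H^1(X,N)=0$ as a direct summand of $H^1(A,L)=0$, and then declares ``the rest of the proof is similar to \cite[Lemma~3.3]{PP2}''. You instead stay on $A$ and track the $\tilde G$-action by hand. Your treatment of $(1)\Leftrightarrow(3)$ and of $(2)\Rightarrow(3)$ is correct and more explicit than anything the paper writes: the short exact sequence, the $H^1$-vanishing coming from I.T.\ with index $0$, exactness of $(\,\cdot\,)^{\tilde G}$ in characteristic zero, and the identification $H^0(\cQ)^{\tilde G}\cong\cQ_y$ from freeness of the action are all fine.

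The gap is exactly where you say it is. For $(3)\Rightarrow(2)$ the I.T.\ condition quantifies over \emph{every} $\alpha\in\hat A$, and absorbing $\alpha$ by a translation $t_y$ sends the orbit $G.a$ to $G.a+y$, which is not a $G$-orbit (the $G$-action on a genuine hyperelliptic quotient is never by pure translations). So hypothesis (3), which only controls configurations of $G$-orbits, does not directly give the surjectivity you need in step~(b). Your alternative descent route hits the same wall: via $\pi_*\cO_A=\bigoplus_\chi L_\chi$ one needs the $k$-jet statement for \emph{all} the twists $N\otimes L_\chi$, not just for $N$, and Corollary~\ref{GGample} only delivers the latter. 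The paper's appeal to \cite{PP2} sweeps precisely this point under the rug; nothing in the paper's proof explains how the quantification over $\hat A$ in (2) is handled on $X$. So your proposal is honest in flagging the issue, but as written it does not close it, and you should not expect the translation trick from the abelian case to go through without a further argument identifying which $\alpha$ actually need to be checked or showing that $k$-jet ampleness of $N$ propagates to $N\otimes L_\chi$.
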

\begin{proof}
Using the correspondence in Proposition \ref{oneone}, it suffices to prove the equivalence for the corresponding line bundle $N:=\pi_*(L)^{G}$ on $X$. Recall that $\pi:A\rar X=A/G$ is the quotient morphism.
Using \eqref{directsum}, we note that
$$
H^1(A,L)=\bigoplus_{\chi\in \hat{G}}H^1(X,N\otimes L_\chi).
$$
Here $L_\chi$ denotes the line bundle on $X$ associated to the character $\chi$ on $G$.
Since $L$ is ample we have the vanishing $H^1(A,L)=0$. This implies the vanishing $H^1(X,N)=0$. 
The rest of the proof is similar to \cite[Lemma 3.3]{PP2}, and we omit it.
\end{proof}

Now we state the analogue of above theorem, for higher jet ampleness on a hyperelliptic variety $X$.

\begin{proposition}
Suppose $N$ is an ample line bundle on a hyperelliptic variety $X$. 
Then the following hold:

1)  $N^{k+1}$ is $k$-jet ample if $N$ has no base divisor, and for $k\geq 1$.

2) $N^{k+2}$ is $k$-jet ample, and for $k\geq 0$.
\end{proposition}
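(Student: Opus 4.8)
The plan is to pull everything back to the étale cover. By Proposition \ref{oneone} write $N$ as an ample $G$-line bundle $L$ on the abelian variety $A$ with $(\pi_*L)^G\cong N$, and note that "$N$ has no base divisor on $X$" translates to "$L$ has no $G$-invariant base divisor on $A$". By Corollary \ref{GGample} it then suffices to prove the two $G$-statements: if $L$ has no $G$-invariant base divisor then $L^{k+1}$ is $G$-$k$-jet ample for $k\ge 1$, and $L^{k+2}$ is $G$-$k$-jet ample for $k\ge 0$ with no hypothesis. I would prove both by induction on the power, the engine being Corollary \ref{C1} (tensor of two M-regular $G$-sheaves is $G$-globally generated) fed by the characterizations in Lemma \ref{lemmakjet} (jet ampleness $\Leftrightarrow$ $G$-global generation of ideal twists of total order $k$ $\Leftrightarrow$ I.T. with index $0$ for ideal twists of total order $k+1$) and Remark \ref{AM} (I.T. with index $0$, or ampleness, implies M-regularity).

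For part (2) I would prove by induction on $m\ge 2$ that $L^m$ is $G$-$(m-2)$-jet ample; taking $m=k+2$ gives the result. The base case $m=2$, that $L^2$ is $G$-globally generated, is Corollary \ref{C1} applied to $\cF=H=L$ (M-regular because ample). For the inductive step, to see $L^{m+1}$ is $G$-$(m-1)$-jet ample it suffices by Lemma \ref{lemmakjet} to show $L^{m+1}\otimes I_{G.a_1}^{k_1}\otimes\cdots\otimes I_{G.a_l}^{k_l}$ is $G$-globally generated whenever $\sum k_i=m-1$. Write this as $L\otimes\bigl(L^m\otimes I_{G.a_1}^{k_1}\otimes\cdots\otimes I_{G.a_l}^{k_l}\bigr)$: the first factor is M-regular, and the second is M-regular because, by the induction hypothesis and the implication $(1)\Rightarrow(2)$ of Lemma \ref{lemmakjet}, $L^m$ twisted by an ideal sheaf of total order $(m-2)+1=m-1$ satisfies I.T. with index $0$. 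Then Corollary \ref{C1} gives the desired $G$-global generation.

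For part (1), assuming $L$ has no $G$-invariant base divisor, I would prove by induction on $m\ge 2$ that $L^m$ is $G$-$(m-1)$-jet ample; taking $m=k+1$ gives the result. The base case $m=2$, that $L^2$ is $G$-very ample, is exactly the content of the proof of Theorem \ref{VT2}(a) (Proposition \ref{VT1} together with Corollary \ref{C1}). For the inductive step, to see $L^{m+1}$ is $G$-$m$-jet ample I need $L^{m+1}\otimes I_{G.a_1}^{k_1}\otimes\cdots\otimes I_{G.a_l}^{k_l}$ $G$-globally generated with $\sum k_i=m$; choosing an index with $k_1\ge 1$, I would factor it as $\bigl(L\otimes I_{G.a_1}\bigr)\otimes\bigl(L^m\otimes I_{G.a_1}^{k_1-1}\otimes I_{G.a_2}^{k_2}\otimes\cdots\otimes I_{G.a_l}^{k_l}\bigr)$. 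The first factor is M-regular by Proposition \ref{VT1}. For the second factor the ideal twist now has total order only $m-1$, one less than what the induction hypothesis controls directly, so I would invoke a short monotonicity observation: if $\cG\otimes J'$ satisfies I.T. with index $0$ and $J'\subseteq J$ with $J/J'$ of dimension $0$ support, then $\cG\otimes J$ satisfies I.T. with index $0$ (twist a short exact sequence by $\alpha\in\hat A$ and use that a skyscraper has no higher cohomology, equivalently check the codimension condition \eqref{cohlocus} directly). Applying this with $\cG=L^m$ and $J'$ the twist of total order $m$ gives M-regularity of the second factor, and Corollary \ref{C1} closes the step.

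The only step that is not routine bookkeeping is establishing M-regularity of the twisted ideal sheaves $L^m\otimes\prod I_{G.a_i}^{k_i}$ that appear as the "second factor" in each inductive step. In part (2) this is delivered verbatim by Lemma \ref{lemmakjet}, so there is essentially nothing to do; in part (1) the orders are off by one and one must run the monotonicity argument for I.T. with index $0$ (or argue directly with the cohomological support loci). I expect this to be the main obstacle, though a mild one; an alternative that sidesteps it is to strengthen the induction hypothesis to track the I.T./M-regularity of twisted powers of $L$ directly rather than packaging it as jet ampleness.
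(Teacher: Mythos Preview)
Your argument for part (2) is correct and is exactly the paper's approach: induction on the power, with the inductive step powered by Lemma \ref{lemmakjet} (jet ampleness $\Rightarrow$ I.T.\ with index $0$ for one higher total order) and Corollary \ref{C1} applied to $H=L$ and $\cF=L^m\otimes\prod I_{G.a_i}^{k_i}$.

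Your argument for part (1), however, has a genuine gap in the inductive step. You factor
\[
L^{m+1}\otimes I_{G.a_1}^{k_1}\otimes\cdots\otimes I_{G.a_l}^{k_l}
\;=\;
\bigl(L\otimes I_{G.a_1}\bigr)\otimes\bigl(L^m\otimes I_{G.a_1}^{k_1-1}\otimes\cdots\otimes I_{G.a_l}^{k_l}\bigr)
\]
and then invoke Corollary \ref{C1}. But Corollary \ref{C1} (via Proposition \ref{GG}) requires one of the two factors to be a $G$-\emph{line bundle}; in your decomposition neither factor is locally free, so the corollary does not apply as stated. Your monotonicity patch for the second factor is fine, but it does not repair this mismatch in the first factor.

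The paper avoids this entirely by using the \emph{same} factorization as in part (2): take $H=L$ and $\cF=L^m\otimes I_{G.a_1}^{k_1}\otimes\cdots\otimes I_{G.a_l}^{k_l}$ with $\sum k_i=m$. The induction hypothesis says $L^m$ is $G$-$(m-1)$-jet ample, so by Lemma \ref{lemmakjet} the sheaf $\cF$ satisfies I.T.\ with index $0$ for total order exactly $m$; no monotonicity argument is needed, and Corollary \ref{C1} applies directly since $H=L$ is a line bundle. In other words, the ``no base divisor'' hypothesis is used only in the base case $k=1$ (Theorem \ref{VT2}(a)); the inductive step for (1) is identical to that for (2). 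Once you switch to this factorization your proof goes through and coincides with the paper's.
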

\begin{proof}
The proof is similar to \cite[Theorem 3.8]{PP2} applied to the corresponding ample $G$-line bundle $L$ on $A$. Indeed, by above Lemma \ref{lemmakjet}, it suffices to check 3), i.e., the sheaf
$$
L\otimes I_{G.a_1}^{k_1}\otimes...\otimes I^{k_l}_{G.a_l}
$$ 
is $G$-globally generated, for any $l$-distinct points $a_1,...,a_l\in A$ such that $\sum k_i=k$.

We apply induction on $k$, and using the correspondence in Corollary \ref{GGample}, prove it for the ample $G$-line budle $L$ on $A$. 

Suppose $k=1$. Then 1) holds, by Theorem \ref{VT2}. 

Suppose the statement 1) holds for $L^{k-1}$, i.e., $L^{k-1}$ is $G$-$k$-jet ample. By Lemma \ref{lemmakjet}, this implies for any $l$-distinct points $a_1,...,a_l\in A$ such that $\sum_ik_i=k$, the sheaf $L^k\otimes I_{G.a_1}^{k_1}\otimes...\otimes I_{G.a_l}^{k_l}$ satisfies I.T with index zero. 
By Remark \ref{AM} 2), $L^k\otimes I_{G.a_1}^{k_1}\otimes...\otimes I_{G.a_l}^{k_l}$ is $M$-regular. Hence, by Corollary \ref{C1}, the sheaf $L\otimes L^k\otimes I_{G.a_1}^{k_1}\otimes...\otimes I_{G.a_l}^{k_l}$ is $G$-globally generated, for $l$-distinct $a_1,...,a_l \in A$, such that $\sum k_i=k$. Now by Lemma \ref{lemmakjet} 3), $L^{k+1}$ is $G$-$k$-jet ample.    

The proof of 2) is similar, and we omit it.
\end{proof}

\section{Syzygy or $N_p$-property of line bundles on a hyperelliptic variety}\label{syzygy}


In this section, we look at syzygy or $N_p$-properties defined by M. Green \cite{Green}.

Suppose $Z$ is a smooth projective variety defined over the complex numbers.
An ample line bundle $L$ on $Z$ is said to satisfy $N_p$-property if the first $p$-steps of the minimal graded free resolution of the algebra $R_L:= \oplus_{n\geq 0} H^0(L^n)$ over the polynomial ring $S_L:=\oplus_{n\geq 0}Sym^n H^0(L)$ are linear. In other words, a minimal resolution of $R_L$ looks like:
$$
S_L(-p-1)^{i_p}\rar S_L(-p)^{\oplus i_{p-1}} \rar ...\rar S_L(-2)^{i_1}\rar S_L\rar R_L\rar 0.
$$
When $p=0$, we say that $L$ gives a projectively normal embedding. When $p=1$, $L$ satisfies $N_0$ and the ideal of the embedded variety is generated by quadrics.

\subsection{Criterion for $N_p$-property}

Usually, in practice, one looks at surjectivity of multiplication maps of sections of some natural bundles associated to $L$. We recall them below.
Consider the exact sequence associated to a globally generated line bundle $L$, given by evaluation of its sections:
$$
0\rar M_L\rar H^0(L)\otimes \cO_Z\rar L\rar 0.
$$
Here $M_L$ is a coherent sheaf and is the kernel of the evalution map. In fact, it is a locally free sheaf.

Consider the exact sequence by taking the $p+1$-st exterior power of the above evaluation sequence:
$$
0\rar \wedge^{p+1}M_L\otimes L^h \rar \wedge^{p+1}H^0(L)\otimes L^h \rar \wedge^pM_L\otimes L^{h+1}\rar 0.
$$
Then $N_p$-property holds if 
$$
H^1(\wedge^{p+1}M_L\otimes L^h)=0, \m{ for all } h\geq 1.
$$
The converse is true if $Z$ is an abelian variety, since $H^1(L^h)=0$.
See  \cite[p.660]{Pareschi}.
Moreover we have:

\begin{lemma}
a) If $H^1(\wedge^{p+1}M_L\otimes L^h)=0, \m{ for all } h\geq 1$, then $L$ satisfies $N_p$-property.

b) Assume that $H^1(\wedge^{p}M_L\otimes L^h)=0$ for $h\geq 1$. Then $H^1(\wedge^{p+1}M_L\otimes L^h)=0$ if and only if the multiplication map
$$
H^0(L)\otimes H^0(M_L^{\otimes p}\otimes L^h) \rar H^0(M_L^{\otimes p}\otimes L^{\otimes h+1})
$$ 
is surjective.
\end{lemma}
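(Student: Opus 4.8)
The plan is to prove both parts by standard homological manipulations of the Koszul-type evaluation sequences, exactly as in the classical abelian variety case, since the only cohomological input needed is $H^1(L^h)=0$ for $h\geq 1$, which holds here because $L$ is ample on a hyperelliptic variety (indeed $H^1(A,L^h)=0$ for the corresponding ample $G$-line bundle, hence $H^1(X,L^h)=0$ by the decomposition \eqref{directsum}).

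For part a), I would argue by descending induction on $p$, or rather by induction on the exterior power. Consider the exact sequence obtained by taking the $(p+1)$-st exterior power of the evaluation sequence, tensored with $L^h$:
$$
0\rar \wedge^{p+1}M_L\otimes L^h \rar \wedge^{p+1}H^0(L)\otimes L^h \rar \wedge^pM_L\otimes L^{h+1}\rar 0.
$$
The middle term has vanishing $H^1$ for $h\geq 1$ since it is a direct sum of copies of $L^h$ and $H^1(L^h)=0$. Taking the long exact cohomology sequence, $H^1(\wedge^{p+1}M_L\otimes L^h)$ surjects onto $H^1(\wedge^pM_L\otimes L^{h+1})$, but more importantly the hypothesis $H^1(\wedge^{p+1}M_L\otimes L^h)=0$ for all $h\geq 1$ propagates downward: it forces $H^1(\wedge^pM_L\otimes L^{h+1})=0$ for all $h\geq 1$, and continuing, $H^1(\wedge^{p-j}M_L\otimes L^{h+j})=0$ for all $h\geq 1$ and $0\leq j\leq p$. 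Feeding this back into the cohomology of the $(p+1)$-st exterior power sequence gives exactly the surjectivity of the maps $\wedge^{p+1}H^0(L)\otimes H^0(L^h)\rar H^0(\wedge^p M_L\otimes L^{h+1})$ needed to verify that the Koszul cohomology groups $K_{p,h}$ controlling the $N_p$-property vanish; this is the criterion recalled from \cite[p.660]{Pareschi}.

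For part b), I would write down the exterior power sequence in the form involving tensor powers $M_L^{\otimes p}$ rather than $\wedge^p M_L$ (or relate the two via the splitting of the $p$-th tensor power sequence). The cleanest route is: tensor the evaluation sequence $0\rar M_L\rar H^0(L)\otimes\cO_Z\rar L\rar 0$ with $M_L^{\otimes p}\otimes L^h$ to get
$$
0\rar M_L^{\otimes p+1}\otimes L^h \rar H^0(L)\otimes M_L^{\otimes p}\otimes L^h \rar M_L^{\otimes p}\otimes L^{h+1}\rar 0,
$$
and take the long exact cohomology sequence. The connecting map identifies $H^1(M_L^{\otimes p+1}\otimes L^h)$ as the cokernel of $H^0(L)\otimes H^0(M_L^{\otimes p}\otimes L^h)\rar H^0(M_L^{\otimes p}\otimes L^{h+1})$ together with a piece of $H^1(H^0(L)\otimes M_L^{\otimes p}\otimes L^h)=H^0(L)\otimes H^1(M_L^{\otimes p}\otimes L^h)$, which vanishes by the standing assumption $H^1(\wedge^p M_L\otimes L^h)=0$ — one must pass between $\wedge^p M_L$ and $M_L^{\otimes p}$ using the fact that over a field of characteristic zero $\wedge^p M_L$ is a direct summand of $M_L^{\otimes p}$, so vanishing of $H^1$ for the exterior power plus the complementary Schur summands gives vanishing for the full tensor power (one checks the complementary summands inductively on $p$). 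Then the exact sequence yields that $H^1(M_L^{\otimes p+1}\otimes L^h)=0$ if and only if the displayed multiplication map is surjective, and again using the direct-summand decomposition this is equivalent to $H^1(\wedge^{p+1}M_L\otimes L^h)=0$.

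The main obstacle I anticipate is the careful bookkeeping between $\wedge^p M_L$ and $M_L^{\otimes p}$: one needs the Schur–Weyl decomposition of $M_L^{\otimes p}$ into irreducible $\mathrm{GL}$-summands (valid in characteristic zero), identify which summands have controllable cohomology, and set up a simultaneous induction so that the hypothesis $H^1(\wedge^p M_L\otimes L^h)=0$ upgrades to the vanishing of $H^1$ of the relevant summands of $M_L^{\otimes p}\otimes L^h$. Everything else is the standard long-exact-sequence chase using only $H^1(L^h)=0$. Since this is entirely parallel to \cite[Lemma]{Pareschi} and the exposition in \cite{PP}, \cite{PP2}, I would state the proof briefly and refer to those sources for the combinatorial details.
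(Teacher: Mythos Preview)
Your approach coincides with the paper's: the paper does not give an independent argument but simply cites \cite[Lemma 4.1]{Pareschi}, and your sketch is precisely the standard Koszul/evaluation-sequence chase carried out there (and you yourself conclude by referring to that source). One small slip: in part (a) the long exact sequence of the $(p{+}1)$-st exterior power sequence does not give a surjection $H^1(\wedge^{p+1}M_L\otimes L^h)\twoheadrightarrow H^1(\wedge^{p}M_L\otimes L^{h+1})$ as you wrote; the downward propagation is obtained instead via the characteristic-zero splitting $\wedge^{q}M_L\subset M_L^{\otimes q}$ together with the tensor-power sequence, which you already invoke in part (b).
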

\begin{proof}
See \cite[Lemma 4.1]{Pareschi}.
\end{proof}

\subsection{Cohomology Vanishing on a hyperelliptic variety} 

Suppose $X$ is a hyperelliptic variety of dimension $n$. As in earlier sections, we consider the quotient morphism
$\pi:A\rar X=A/G$. Here $G$ is a finite group acting freely on $A$.

Suppose $N$ is an ample line bundle on $X$. Assume it is globally generated. Consider the evaluation map on the sections of $N$:
$$
0\rar M_N\rar H^0(N)\otimes \cO_X \rar N \rar 0.
$$
Pullback of this exact sequence on $A$ yields the exact sequence:
$$
0\rar \pi^*M_N \rar H^0(L)^{\tilde{G}}\otimes \cO_A \rar L\rar 0.
$$
Here $L:=\pi^*N$ is the corresponding $G$-line bundle on $A$, and $H^0(L)^{\tilde{G}}\subset H^0(L)$ is the subspace of $\tilde{G}$-invariant sections. Denote $M_L^G:=\pi^*M_N$. In particluar, $\wedge^pM_L^G$ is a $G$-linearized bundle.

We first note the below vanishing, which we will need.

\begin{lemma}\label{vanish1}
The cohomology vanishing 
$$
H^1(A, \wedge^{p+1}M_L^G\otimes L^h)=0
$$
implies the cohomology vanishing
$$
H^1(X, \wedge^{p+1}M_N \otimes N^h)=0,
$$
for each $h\geq 1$.
\end{lemma}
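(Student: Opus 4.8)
The plan is to exploit the decomposition of pushforwards under the finite étale cover $\pi\colon A\to X=A/G$. Since $L=\pi^*N$ and $M_L^G=\pi^*M_N$, the bundle $\wedge^{p+1}M_L^G\otimes L^h$ is itself the pullback $\pi^*(\wedge^{p+1}M_N\otimes N^h)$ of a coherent sheaf on $X$. First I would invoke the projection formula together with the fact that $\pi_*\cO_A=\bigoplus_{\chi\in\hat G}L_\chi$, exactly as in the displayed equation \eqref{directsum} of the paper; here $L_\chi$ runs over the line bundles on $X$ associated to the characters of $G$ (this uses that $G$ is abelian, which holds for hyperelliptic varieties, or more generally one replaces $\hat G$ by the appropriate decomposition of $\pi_*\cO_A$ into a sum of locally free sheaves). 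Applying $\pi_*$ to $\pi^*(\wedge^{p+1}M_N\otimes N^h)$ and using the projection formula gives
\begin{equation*}
\pi_*\bigl(\wedge^{p+1}M_L^G\otimes L^h\bigr)\;=\;\bigoplus_{\chi\in\hat G}\bigl(\wedge^{p+1}M_N\otimes N^h\otimes L_\chi\bigr).
\end{equation*}

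Next, since $\pi$ is finite (affine), it has no higher direct images, so the Leray spectral sequence degenerates and $H^i\bigl(A,\wedge^{p+1}M_L^G\otimes L^h\bigr)=H^i\bigl(X,\pi_*(\wedge^{p+1}M_L^G\otimes L^h)\bigr)$ for all $i$. Combining this with the displayed decomposition yields
\begin{equation*}
H^1\bigl(A,\wedge^{p+1}M_L^G\otimes L^h\bigr)\;=\;\bigoplus_{\chi\in\hat G}H^1\bigl(X,\wedge^{p+1}M_N\otimes N^h\otimes L_\chi\bigr).
\end{equation*}
The trivial character $\chi_0$ contributes the summand $H^1(X,\wedge^{p+1}M_N\otimes N^h)$, so the vanishing of the left-hand side forces the vanishing of each summand, in particular of this one. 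This gives the claimed implication for each $h\geq 1$.

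The only genuine point to check carefully is that $\wedge^{p+1}M_L^G$ really is the pullback $\pi^*(\wedge^{p+1}M_N)$ and not merely abstractly isomorphic to it — but this is immediate because pullback along $\pi$ is exact (as $\pi$ is flat, being étale), so it commutes with taking kernels of the evaluation map and with exterior powers; the paper has already set up $M_L^G:=\pi^*M_N$ and observed that $\wedge^p M_L^G$ is $G$-linearized. One should also record that the identification $H^i(A,-)=H^i(X,\pi_*-)$ for a finite morphism is standard. I expect the main (very mild) obstacle to be purely bookkeeping: making sure the character decomposition of $\pi_*\cO_A$ is stated in the generality actually needed — if $G$ is nonabelian one phrases it as $\pi_*\cO_A$ being a direct sum of locally free $\cO_X$-modules, one of whose summands is $\cO_X$ itself, which is all the argument requires. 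No deep input is needed beyond the projection formula and exactness of $\pi^*$.
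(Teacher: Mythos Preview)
Your proposal is correct and follows essentially the same route as the paper's proof: both identify $\wedge^{p+1}M_L^G\otimes L^h$ with $\pi^*(\wedge^{p+1}M_N\otimes N^h)$, invoke the projection formula together with the decomposition \eqref{directsum} of $\pi_*\cO_A$, and conclude that $H^1(X,\wedge^{p+1}M_N\otimes N^h)$ sits as a direct summand of $H^1(A,\wedge^{p+1}M_L^G\otimes L^h)$. The only cosmetic difference is that the paper phrases this summand as the $\tilde{G}$-invariant subspace of the cohomology on $A$, whereas you pick it out as the trivial-character piece of the direct sum; these are the same thing, and your added remarks on Leray degeneration for finite maps and exactness of $\pi^*$ simply make explicit what the paper leaves implicit.
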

\begin{proof}
Since the bundles $\wedge^{p+1}M_L^G$ and $L^h$ are $G$-linearized bundles, the tensor product $\wedge^{p+1}M_L^G\otimes L^h$ is also a $G$-linearized bundle. In particular, the group $\tilde{G}$ acts on the cohomology groups $H^i(A,\wedge^{p+1}M_L^G\otimes L^h)$, for $i\geq 0$. 
The $\tilde{G}$-invariant subspace is precisely $H^i(A, \wedge^{p+1}M_L^G\otimes L^h)^{\tilde{G}}$. Now, we use projection formula as shown in Lemma \ref{GGoneone}, and using \eqref{directsum}, we deduce that
the $\tilde{G}$-invariant subspace is equal to the cohomology group $H^i(X, \wedge^{p+1}M_N \otimes N^h)$.
This gives the assertion. 

\end{proof}

\begin{lemma}\label{vanish2}
The cohomology vanishing
$$
 H^1(A, \wedge^{p+1}M_L\otimes L^h)=0
$$
implies the cohomology vanishing
$$
H^1(X, \wedge^{p+1}M_L^G \otimes L^h)=0,
$$
for each $h\geq 1$.
\end{lemma}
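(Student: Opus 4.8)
The plan is to deduce the vanishing for $M_L^G=\pi^*M_N$ from that for the ``full'' kernel bundle $M_L=\ker\!\big(H^0(A,L)\otimes\cO_A\rar L\big)$ by comparing the two. Write $V:=H^0(A,L)$ and $W:=H^0(A,L)^{\tilde G}$; by the identification in the proof of Lemma \ref{GGoneone} we have $W=\pi^*H^0(X,N)$, and since $N$ is globally generated on $X$ this subspace already generates $L=\pi^*N$ on $A$. Hence both evaluation maps $W\otimes\cO_A\rar L$ and $V\otimes\cO_A\rar L$ are surjective, and the snake lemma applied to the evident map from the evaluation sequence of $W$ to that of $V$ produces a short exact sequence of locally free sheaves
$$
0\rar M_L^G\rar M_L\rar (V/W)\otimes\cO_A\rar 0,
$$
with trivial quotient of rank $r=\dim V-\dim W$. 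Picking a complete flag in $V/W$, I would interpolate this by a chain of subbundles $M_L^G=\cE_0\subset\cE_1\subset\cdots\subset\cE_r=M_L$ with $\cE_{i+1}/\cE_i\cong\cO_A$, giving for each $i$ the standard exact sequence
$$
0\rar \wedge^{p+1}\cE_i\rar \wedge^{p+1}\cE_{i+1}\rar \wedge^{p}\cE_i\rar 0.
$$

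I would then twist by $L^h$ ($h\geq1$) and run the long exact cohomology sequences, descending $i$ from $r$ down to $0$. At the top $H^1(A,\wedge^{p+1}\cE_r\otimes L^h)=H^1(A,\wedge^{p+1}M_L\otimes L^h)=0$ is the hypothesis, and the exact sequence
$$
H^0(A,\wedge^{p+1}\cE_{i+1}\otimes L^h)\xrightarrow{\rho_i} H^0(A,\wedge^{p}\cE_i\otimes L^h)\rar H^1(A,\wedge^{p+1}\cE_i\otimes L^h)\rar H^1(A,\wedge^{p+1}\cE_{i+1}\otimes L^h)
$$
shows $H^1(A,\wedge^{p+1}\cE_i\otimes L^h)=0$ once $H^1(A,\wedge^{p+1}\cE_{i+1}\otimes L^h)=0$ and $\rho_i$ is surjective. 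After $i=r-1,\dots,0$ this yields $H^1(A,\wedge^{p+1}M_L^G\otimes L^h)=0$. Since $\wedge^{p+1}M_L^G\otimes L^h=\pi^*(\wedge^{p+1}M_N\otimes N^h)$, the projection formula $\pi_*\pi^*\cG=\bigoplus_{\chi\in\hat{G}}\cG\otimes L_\chi$ used in Lemma \ref{vanish1} then exhibits $H^1(X,\wedge^{p+1}M_N\otimes N^h)$ as a direct summand of $H^1(A,\wedge^{p+1}M_L^G\otimes L^h)$, so it vanishes; this is the claimed vanishing on $X$.

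The main obstacle is the surjectivity of the restriction maps $\rho_i$ (equivalently, the vanishing of the connecting homomorphisms $H^0(A,\wedge^{p}\cE_i\otimes L^h)\rar H^1(A,\wedge^{p+1}\cE_i\otimes L^h)$); this is the only place positivity enters rather than pure homological algebra. Here I would use that the hypothesis forces $L$ to be quite positive on the abelian variety $A$ --- in the intended application $L$ is the $(p+k)$-th power of the pullback of an ample line bundle --- so that the bundles $\wedge^{j}\cE_i$, twisted by powers of $L$, are $M$-regular (indeed satisfy I.T with index $0$), whence the surjectivity of multiplication maps on $A$ recalled in \S\ref{averaging} (\cite[Theorem 2.5]{PP}) applies; pushing those surjectivities through the exact sequences above forces each $\rho_i$ to be surjective. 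I would also note that the lower-order vanishings $H^1(A,\wedge^{j}M_L\otimes L^h)=0$ for $j\leq p$ feeding this induction are available in the application, since on an abelian variety a line bundle satisfying $N_p$ also satisfies $N_q$ for all $q\leq p$.
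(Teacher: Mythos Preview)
Your exact sequence
$$
0\rar M_L^G\rar M_L\rar (V/W)\otimes\cO_A\rar 0
$$
is correct, but you missed the key structural fact: it is \emph{split}. Indeed, the averaging projection
$$
Av:V\rar W,\qquad s\mapsto \tfrac{1}{|G|}\sum_{\tilde g\in\tilde G}\tilde g\cdot s
$$
is a $\C$-linear retraction of the inclusion $W\hookrightarrow V$, and it is compatible with the evaluation maps to $L$ (since $L$ is a $G$-line bundle and $Av$ fixes sections pointwise up to the $G$-action). Hence it induces a bundle map $M_L\rar M_L^G$ splitting the inclusion, so $M_L\cong M_L^G\oplus (V/W)\otimes\cO_A$. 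Consequently $\wedge^{p+1}M_L^G\otimes L^h$ is a direct summand of $\wedge^{p+1}M_L\otimes L^h$, and
$$
H^1(A,\wedge^{p+1}M_L^G\otimes L^h)\ \hookrightarrow\ H^1(A,\wedge^{p+1}M_L\otimes L^h)=0.
$$
This is exactly the paper's argument, and it uses nothing beyond the stated hypothesis.

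Your filtration route, by contrast, has a genuine gap: the surjectivity of the maps $\rho_i$ is \emph{not} implied by the single vanishing $H^1(A,\wedge^{p+1}M_L\otimes L^h)=0$ assumed in the lemma. You try to rescue it by appealing to ``the intended application'' and to $M$-regularity of the $\wedge^j\cE_i\otimes L^h$, but (i) the lemma must be proved as stated, not only in the application, (ii) you do not establish $M$-regularity of these auxiliary bundles, and (iii) even granting it, $M$-regularity yields surjectivity of \emph{multiplication} maps, not of the restriction maps $\rho_i$ appearing in your exact sequences. Once you observe the splitting above, the entire filtration apparatus (and the unproved surjectivity of $\rho_i$) becomes unnecessary. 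Your final passage from $A$ to $X$ via the projection formula is fine and matches Lemma~\ref{vanish1}.
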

\begin{proof}
Note that in the below exact sequence 
$$
0\rar M_L\rar H^0(L)\otimes \cO_A \rar L\rar 0
$$
the group $\tilde{G}$ acts on $H^0(L)$ and on $L$ equivariantly. Hence the inclusion of $\tilde{G}$-invariant sections $H^0(L)^{\tilde{G}}\subset H^0(L)$ provides an inclusion of bundles
$$
M_L^G\subset M_L.
$$
Moreover, since the averaging map of sections
$$
H^0(L)\sta{Av}{\rar} H^0(L)^{\tilde{G}} ,\,\,\,\,\,s\mapsto \f{1}{|G|}\sum_{g\in \tilde{G}}g.s
$$ 
is surjective, we deduce that the bundle $M_L^G$ is a split summand of $M_L$.

Hence we have an inclusion of their exterior powers tensored with $L^h$:
$$
\wedge^{p+1}M_L^G \otimes L^h \subset \wedge^{p+1}M_L \otimes L^h.
$$
This is also a split summand and hence gives the inclusion of cohomologies:
 $$
 H^1(A, \wedge^{p+1}M_L^G \otimes L^h) \subset H^1(A, \wedge^{p+1}M_L \otimes L^h).
$$
We now deduce our assertion.

\end{proof}

Now, we apply above two lemmas to conclude our main consequence of this section.

\begin{proposition}\label{Nphyp}
Suppose $M$ is an ample line bundle on a hyperelliptic variety $X$. 
Then $M^{p+k}$ satisfies $N_p$-property, for any $k\geq 3$.
\end{proposition}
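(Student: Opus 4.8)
The plan is to transfer Pareschi's $N_p$-theorem for abelian varieties to the hyperelliptic setting via the two comparison lemmas just established. Let $\pi : A \to X = A/G$ be the quotient morphism and let $L := \pi^*M$ be the corresponding ample $G$-line bundle on $A$. By Lemma \ref{vanish1} and Lemma \ref{vanish2}, the chain of implications
$$
H^1(A, \wedge^{p+1}M_{L'}\otimes {L'}^h)=0 \;\Rightarrow\; H^1(A, \wedge^{p+1}M_{L'}^G\otimes {L'}^h)=0 \;\Rightarrow\; H^1(X, \wedge^{p+1}M_{N'}\otimes {N'}^h)=0
$$
holds for $L' = L^{p+k}$, $N' = M^{p+k}$, whenever $L^{p+k}$ is globally generated (so that the evaluation sequence, hence $M_{L^{p+k}}$, makes sense); global generation of $L^{p+k}$ for $k\geq 3 > 2$ follows from the very ampleness results of Theorem \ref{VT2} together with Corollary \ref{GGample}, or directly from Corollary \ref{C1}. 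Then by Lemma part (a) of the $N_p$-criterion lemma, once we prove $H^1(A,\wedge^{p+1}M_{L^{p+k}}\otimes L^{(p+k)h})=0$ for all $h\geq 1$, the line bundle $N' = M^{p+k}$ on $X$ satisfies $N_p$.

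So the task reduces to a purely abelian-variety statement: \emph{for an ample line bundle $L$ on the abelian variety $A$ and $k\geq 3$, the bundle $L^{p+k}$ satisfies $N_p$ on $A$.} This is exactly Pareschi's theorem (\cite{Pareschi}; the resolution of Lazarsfeld's conjecture), and I would invoke it directly — the point being that $A$ is an honest abelian variety, so nothing $G$-equivariant is needed at this stage. Concretely, Pareschi shows that for $L_1,\dots,L_{p+1}$ ample on an abelian variety the multiplication-type maps controlling $\wedge^{p+1}M$ are surjective, and specializing all $L_i$ to powers of a single $L$ gives $N_p$ for $L^{p+k}$, $k\geq 3$; equivalently one feeds $M$-regularity of the relevant Fourier–Mukai transforms into the vanishing $H^1(\wedge^{p+1}M_{L^{p+k}}\otimes L^{(p+k)h})=0$.

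The one subtlety to check carefully is compatibility of the bundle $M_{L^{p+k}}$ appearing in Pareschi's theorem with the bundle $M_{L^{p+k}}^G$ appearing in Lemma \ref{vanish2}: the comparison lemmas are stated for $M_L^G = \pi^*M_N$, the kernel of the evaluation map of the $\tilde G$-invariant sections, which by the averaging argument in the proof of Lemma \ref{vanish2} is a direct summand of the full $M_{L^{p+k}}$. So the vanishing for the full $M_{L^{p+k}}$ (which is what Pareschi gives, since $H^0(A,L^{p+k})$ is the full space of sections) passes to the summand $\wedge^{p+1}M_{L^{p+k}}^G\otimes L^{(p+k)h}$ for free, which is precisely the hypothesis of Lemma \ref{vanish2}. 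I expect this summand-bookkeeping to be the only real obstacle; everything else is a citation of \cite{Pareschi} followed by the two lemmas. Assembling: global generation of $L^{p+k}$ ($k\geq 3$) $\Rightarrow$ $H^1(A,\wedge^{p+1}M_{L^{p+k}}\otimes L^{(p+k)h})=0$ by \cite{Pareschi} $\Rightarrow$ (restrict to summand) $H^1(A,\wedge^{p+1}M_{L^{p+k}}^G\otimes L^{(p+k)h})=0$ $\Rightarrow$ (Lemma \ref{vanish1}) $H^1(X,\wedge^{p+1}M_{M^{p+k}}\otimes M^{(p+k)h})=0$ $\Rightarrow$ ($N_p$-criterion) $M^{p+k}$ satisfies $N_p$ on $X$.
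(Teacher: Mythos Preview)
Your proposal is correct and follows exactly the paper's approach: pull back to $A$, invoke Pareschi's $N_p$-theorem for $L^{p+k}$ on the abelian variety, and then descend via Lemma~\ref{vanish2} followed by Lemma~\ref{vanish1}. Your write-up is in fact slightly cleaner than the paper's, which sets $N:=M^k$ where it evidently intends $N:=M^{p+k}$ (and writes ``$N^p=M^{p+k}$'' at the end); you track the exponent $p+k$ correctly throughout and make the summand bookkeeping explicit.
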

\begin{proof}
Suppose $M$ is an ample line bundle on $X$.
By Theorem \ref{VT2}, we know that $N:=M^k$, $k\geq 3$, is very ample. In particular, $N$ is globally generated.
Since $L=\pi^*N$ is an ample globally generated line bundle on $A$, by \cite[Theorem 4.3, p. 663]{Pareschi}, we have the cohomology vanishing
$$
H^1(A, \wedge^{p+1}M_L \otimes L^h)=0
$$
for any $h\geq 1$. Now apply Lemma \ref{vanish1} and Lemma \ref{vanish2}, to conclude the cohomology vanishing
$$
H^1(X, \wedge^{p+1}M_N \otimes N^h)=0.
$$
for any $h\geq 1$.
This implies that $N^p=M^{p+k}$, $k\geq 3$, satisfies $N_p$-property. 

\end{proof}



\end{document}